\documentclass[reqno]{amsart}

\usepackage[all]{xy}
\usepackage{textcomp,gensymb}
\usepackage{eurosym}
\usepackage{enumerate}

\usepackage{graphicx,graphics} 
\DeclareGraphicsExtensions{.pdf}
\usepackage{caption}
\usepackage{subcaption}
\usepackage{amsmath,amsthm, amscd, amssymb, amsfonts, mathrsfs} 
\usepackage[mathscr]{euscript} 
\usepackage[normalem]{ulem}
\setcounter{MaxMatrixCols}{10}   
\addtolength{\textwidth}{1.24in}  
\addtolength{\hoffset}{-.625in}
\setlength{\parskip}{0ex} \linespread{1}   
\addtolength{\textheight}{.485in} 
\addtolength{\voffset}{-.5in} 
\vfuzz2pt
\hfuzz2pt

\newtheorem{theorem}{Theorem}[section]
\newtheorem{corollary}[theorem]{Corollary}
\newtheorem{lemma}[theorem]{Lemma}
\newtheorem{proposition}[theorem]{Proposition} 
 
\theoremstyle{definition}

\theoremstyle{remark}

\newtheoremstyle{mystyle}{2mm}{0mm}{}{}{\bfseries}{}{1.1ex}{\thmnumber{#2}.\hspace*{1ex}\thmnote{#3}}
\theoremstyle{mystyle}
\newtheorem{fact}[theorem]{}
\newtheoremstyle{myremark}{2mm}{0mm}{}{}{\bfseries}{}{ }{\thmname{#1}
\thmnumber{#2}. \thmnote{ #3}}
\theoremstyle{myremark}




\newcommand\ot{\otimes}

\newcommand\op{\oplus}
\newcommand\kal{\mathscr}
\newcommand\bb{\mathbb}

\newcommand\dr{\mathrm}
\newcommand\ds{\displaystyle}

\DeclareMathOperator{\lgr}{{^{\ast\text{-gr}}}}
\DeclareMathOperator{\lgrr}{^{\ast\text{-gr}}\!}

\def\T3{\mathbb{T}_3}






\newcommand{\xrar}{\xrightarrow}
\newcommand{\seq}{\subseteq}

 
\DeclareMathAlphabet{\mathpzc}{OT1}{pzc}{m}{it}

\usepackage[colorlinks, citecolor=blue]{hyperref}

\begin{document}

\title{The EXT Ring of Koszul Rings}
\author{Adrian Manea}

\address{University of Bucharest, Department of Mathematics, 14 Academiei Street, Bucharest, Ro-010014, Romania}
\subjclass[2010]{Primary: 16E40; Secondary: 16T15, 16E30}
\keywords{Koszul rings, Koszul corings, Koszul pairs, $\dr{Ext}$ ring}
\date{}

\begin{abstract}
The aim of this article is to study the $\dr{Ext}$ ring associated to a Koszul $R$-ring and to use it to provide further characterisations of the latter. As such, for $R$ being a semisimple ring and $A$ a graded Koszul $R$-ring, we will prove that there is an isomorphism of DG rings between $\kal{E}(A):=\dr{Ext}^\bullet_A(R,R)$ and $\lgr\dr{T}(A) \simeq \dr{E}(\lgrr A)$. Also, the $\dr{Ext}$ $R$-ring will prove to be isomorphic to the shriek ring of the left graded dual of $A$, namely $\kal{E}(A) \simeq (\lgrr A)^!$. As an application, these isomorphisms will be studied in the context of incidence $R$-(co)rings for Koszul posets. Thus, we will obtain a description and method of computing the shriek ring for $\Bbbk^c[\kal{P}]$, the incidence $R$-coring of a Koszul poset. Another application is provided for monoid rings associated to submonoids of $\bb{Z}^n$.
\end{abstract}

  \maketitle
  
\section*{Introduction}
In \cite{jps}, the authors jointly developed an approach for studying Koszul $R$-rings using what was called \emph{Koszul pairs}, a construction which we briefly recall. Let $R$ be a semisimple ring. By an $R$-ring is indicated an algebra in the tensor category of $R$-bimodules. Let $A$ be a graded and connected $R$-ring, $A=\ds\oplus_{n \geq 0} \ A^n$, with $A^0=R$ and $C$ be a graded and connected $R$-coring, $C=\ds\oplus_{n \geq 0} \ C_n$, with $C_0=R$. The pair $(A,C)$ is called an \emph{almost-Koszul pair} if there exists an isomorphism of $R$-bimodules $\theta_{C,A}: C_1 \to A^1$ such that the following composition gives the zero map:
\begin{equation} \label{ec:theta}
C_2 \xrar{\Delta_{1,1}} C_1 \ot C_1 \xrar{\theta_{C,A} \ot \theta_{C,A}} A^1 \ot A^1 \xrar{\mu^{1,1}} A^2.
\end{equation}

Note that $\Delta_{1,1}$ is the component of the comultiplication for $C$, the map $\mu^{1,1}$ is the corresponding component of the multiplication for $A$ and any unadorned tensor products are considered over $R$.

The study of almost Koszul pairs is continued by associating to them three cochain complexes and three chain complexes, situated in different categories and which measure how far is $A$ from being a Koszul ring. Theorem 2.3 in \cite{jps} shows that if any of the six complexes is exact, then all of them are so, case in which the pair $(A,C)$ is called a \emph{Koszul pair}. It is also proved in the cited article that this approach agrees with the classical one from \cite{bgs}, in the sense that a Koszul pair consists of a Koszul $R$-ring and a Koszul $R$-coring. The latter structure was introduced and studied at length as a natural dualisation of the former in \cite[\S 3]{ms}.

To the components of any (almost) Koszul pair one associates a graded , connected coring and a similar ring. Namely, define $T(A):= \dr{Tor}_\bullet^A(R,R)$, which is a connected, graded coring and $E(C):= \dr{Ext}^\bullet_C(R,R)$, which is a connected, graded $R$-ring. Note that, in general, the construction of $E(C)$ and $T(A)$ works for arbitrary graded, connected (co)rings. The properties and the interplay between the pair $(A,C)$ and these two structures are studied at length in \cite{jps} and \cite{ms}. 

Another structure that is of great use is the \emph{graded linear dual} of a left locally finite $R$-(co)ring. For such a structure $A$, the left graded linear dual is defined taking into consideration left $R$-module structures as: 
\[\lgrr A = \ds\op_{n \geq 0}  {^\ast\!}(A^n) = \ds\op_{n \geq 0} \ \dr{Hom}_R(_RA,_RR).\]

The right version is defined analogously. Dually, for a left locally finite connected $R$-coring $C$, one can define $\lgr C$ and prove that $\lgrr A$ becomes a graded, connected $R^{op}$-coring and $\lgrr C$ becomes a graded, connected $R^{op}$-ring. The properties and some basic results involving the graded linear duals are presented in \cite[\S 4.1-4.2]{ms}. Also, we remark that the notion of a left (right) dual for a left (right) finitely generated $R$-ring was first introduced in \cite[\S 17.9]{brw}.

In this article, for a Koszul pair $(A,C)$, we are interested in studying the connections between the $R$-coring $T(A)$ and the $R$-ring $E(C)$, on the one hand, and the $\dr{Ext}$ $R$-ring $\kal{E}(A) := \dr{Ext}_A^\bullet(R,R)$, on the other. This will be computed using the left normalised bar resolution of $A$, denoted by $\beta_\bullet^l(A)$, which we will compare with the Koszul resolution $\dr{K}_\bullet^l(A,C)$. We will prove that there is a morphism $\varphi : \dr{K}_\bullet^l(A,C) \to \beta_\bullet^l(A)$, which is a quasi-isomorphism and thus induces an isomorphism of rings from $\kal{E}(A)$ to $\lgr T(A)$. Further, using \cite[Corollary 4.7]{ms}, they are also isomorphic to $E(\lgrr A)$.

As an application, we study the particular case of Koszul posets. Thus, if $\kal{P}$ is a finite, graded poset which is Koszul (see \cite[\S 4.8]{ms}) and $A := \Bbbk^a[\kal{P}]$ is its incidence $R$-ring, then $\lgrr A \simeq \Bbbk^c[\kal{P}]$, cf. \cite[Theorem 4.9]{ms}. Moreover, in this particular case, $\kal{E}(A) \simeq \Bbbk^c[\kal{P}]^!$, the shriek ring associated to the incidence $R$-coring for the poset $\kal{P}$. This isomorphism will give us an easier way of computing the former structure and concrete examples will be provided in the case of Koszul posets. Furthermore, we discuss an example in the direction of monoid rings associated to submonoids of $\bb{Z}^n$. The Koszulity of the respective monoid ring will be proved using tools from commutative algebra. Also, the results developed in the second part of the article will allow a concrete description of the $\dr{Ext}$ algebra.

\section{Preliminaries}
We will present here some results and definitions that are needed later on in the article. They will be heavily based on \cite{jps} and \cite{ms}, so we indicate these articles as comprehensive reads for the notions that we introduce and use.

\begin{fact}[Notation.] Some shorthand notations are used throughout the article and we collect them here. For the graded, connected $R$-ring $A=\ds\op_{n \geq 0}\ A^n$, we will denote by $A_+ := \ds\op_{n > 0} A^n$. We can further make the identification $A_+ = A/A^0$. The same notation will be used for the graded, connected $R$-coring $C=\ds\op_{n \geq 0} \ C_n$, hence we have $C_+ = C/C_0$. Note that the (co)multiplications of $A$ and $C$ respectively induce maps $\mu_+ : A_+ \ot A_+ \to A_+$ and $\Delta_+ : C_+ \to C_+ \ot C_+$ respectively.

The $n$-fold tensor product of $A$ with itself will be denoted by $A^{(n)}$. 

Given that we are working with graded (co)rings, it is worth making a notational remark regarding Sweedler's notation for comultiplication and a respective one for multiplication. As such, for the $R$-ring $A$, the multiplication map $\mu : A \ot A \to A$ has, for any $m,n \geq 0$, the components $\mu^{m,n} : A^m \ot A^n \to A^{m+n}$. The coring $A$ will be called \emph{strongly graded} if all of these maps are surjective, for all $m,\ n$. Correspondingly, the comultiplication $\Delta : C \to C \ot C$ of the $R$-coring $C$ has its components $\Delta_{m,n} : C_{m+n} \to C_m \ot C_n$ and $C$ is called \emph{strongly graded} if all of these maps are injective. Graded Sweedler's sigma notation for the image of an element $c \in C_{m+n}$ is:
\[ \Delta_{m,n}(c)=\sum c_{1,m} \ot c_{2,n}.\]

Also, the identity map of any set $M$ will be denoted by $\dr{Id}_M$.
	
\end{fact}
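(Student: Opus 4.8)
The statement labelled as a \emph{fact} above is a notational environment rather than an assertion to be established: it fixes the abbreviations $A_+$, $C_+$, $A^{(n)}$, the graded components $\mu^{m,n}$ and $\Delta_{m,n}$, graded Sweedler notation, and the term \emph{strongly graded}. There is accordingly no theorem-content here, and strictly speaking nothing to prove. The only clauses that carry any verification burden at all are the two identifications $A_+ = A/A^0$ and $C_+ = C/C_0$, together with the claim that the (co)multiplications descend to maps $\mu_+ : A_+ \ot A_+ \to A_+$ and $\Delta_+ : C_+ \to C_+ \ot C_+$. My plan is simply to record the one-line checks for these, in that order; I expect none of them to present any obstacle, since each is forced by the grading.

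First I would justify $A_+ = A/A^0$. Since $A = \ds\op_{n \geq 0} A^n$ with $A^0 = R$, the summand $A_+ = \ds\op_{n > 0} A^n$ is precisely the kernel of the degree-zero projection, and the canonical map $A \to A/A^0$ restricts on $A_+$ to an isomorphism of $R$-bimodules, because $A_+$ is the complementary direct summand of $A^0$. The identical argument, run with $C = \ds\op_{n \geq 0} C_n$ and $C_0 = R$, yields $C_+ = C/C_0$. These are the standard facts that a graded, connected (co)ring splits off its degree-zero part $R$ with complement the augmentation (co)ideal.

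Next I would check the induced maps. For the multiplication it suffices to restrict $\mu$ to $A_+ \ot A_+$: whenever $m, n > 0$ we have $m + n > 0$, so $\mu^{m,n}(A^m \ot A^n) \seq A^{m+n} \seq A_+$, whence $\mu(A_+ \ot A_+) \seq A_+$, and the corestriction is $\mu_+$. Dually, for $c \in C_n$ with $n > 0$ I would set $\Delta_+(c) := \sum_{i,j > 0,\ i+j = n} \Delta_{i,j}(c)$, that is, the composite of $\Delta$ with the projection $C \ot C \twoheadrightarrow C_+ \ot C_+$ that discards the summands in which one tensor factor lies in $C_0 = R$; since this projection is graded and $R$-bilinear, $\Delta_+$ is a morphism of $R$-bimodules. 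As every step is dictated by the grading, the \emph{fact} requires no genuine proof beyond these immediate observations.
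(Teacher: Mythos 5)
Your reading is exactly right: this is a notation-fixing environment, and the paper itself offers no proof for it. The brief checks you record (that $A_+$ is the complement of $A^0$, hence canonically $A/A^0$, and that $\mu$ and $\Delta$ descend to $\mu_+$ and $\Delta_+$ via the grading and the projection killing the degree-zero summands) are the standard, implicit justifications, so your proposal is correct and agrees with the paper's treatment.
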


\begin{fact}[The Normalised (Co)Chain Complex.] \label{sec:bar}
For computing $T(A)$ and $E(C)$, respectively, we start by recalling the normalised (co)chain complexes that are required. We will be brief in doing this and we indicate \cite[\S 1.4]{ms} and \cite[\S 1.5 and \S 1.15]{jps} for further details.

To start off, $T_n(A) = \dr{Tor}_n^A(R,R)$ is the $n$-th homology group of the normalised bar complex $(\Omega_\bullet(A),d_\bullet)$, where $\Omega_n(A)=A_+^{(n)}$. The differentials $d_n : \Omega_n(A) \to \Omega_{n-1}(A)$ are defined as $d_1=0$ and for $n \geq 1$:
\[ d_n(a_1 \ot a_2 \ot \cdots \ot a_n)=\ds\sum_{i=1}^{n-1} (-1)^{i-1} a_1 \ot a_2 \ot \cdots \ot a_ia_{i+1} \ot \cdots \ot a_n.\]

It is well known the fact that the normalised bar complex is a DG coalgebra in the category of $R$-bimodules, thence $T(A)=\op_{n \geq 0} T_n(A)$ becomes canonically a connected, graded $R$-coring.

Dually, if $C$ is a graded connected $R$-coring, the corresponding normalised bar complex $(\Omega^\bullet(C),d^\bullet)$ has the terms $\Omega^n(C)=C_+^{(n)}$ and the differentials $d^n : \Omega^n(C) \to \Omega^{n+1}(C)$ defined as $d^0=0$ and for all $n \geq 1$:
\[ d^n = \ds\sum_{i=1}^n (-1)^{i-1} \dr{Id}_{C_+^{i-1}} \ot \Delta_+ \ot \dr{Id}_{C_+^{n-i}}.\]

Now, as in the dual case, $E^n(C)$ is the $n$-th cohomology group of this complex, computed as $\dr{Ext}^n_C(R,R)$ and $E(C) = \op_{n \geq 0} \ E^n(C)$ has a canonical structure of a connected, graded $R$-ring with respect to the concatenation of tensor monomials.
	
\end{fact}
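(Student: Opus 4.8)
The plan is to treat the two halves of the statement in parallel, since they are formally dual, reducing each to a standard resolution argument together with a single compatibility check.

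For the identification $T_n(A)=\dr{Tor}_n^A(R,R)=H_n(\Omega_\bullet(A),d_\bullet)$, I would begin from the left normalised bar resolution $\beta_\bullet^l(A)$ of the augmentation module ${}_{A}R$, whose $n$-th term is $A\ot A_+^{(n)}$; this is a resolution by relatively free left $A$-modules, and the augmentation $A\to R$ provides the standard contracting homotopy, so it is acyclic in positive degrees with $H_0=R$. Applying $R\ot_A-$ then collapses the leading copy of $A$ to $R$ and kills the outer face map, because the augmentation $\epsilon$ annihilates $A_+$ while the leading product $a_0a_1$ lies in $A_+$; what remains is exactly the complex $(\Omega_\bullet(A),d_\bullet)$, carrying only the inner multiplications $a_ia_{i+1}$ with the displayed signs $(-1)^{i-1}$. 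Hence $H_n$ of this complex is $\dr{Tor}_n^A(R,R)$. The only delicate point is that normalisation — using $A_+=A/A^0$ in each slot rather than $A$ — is a quasi-isomorphism; since $A^0=R$, this is the usual assertion that the degenerate subcomplex spanned by a unit in some tensor slot is acyclic.

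For the coring structure I would equip $\Omega_\bullet(A)$ with the deconcatenation coproduct
\[
\Delta(a_1\ot\cdots\ot a_n)=\ds\sum_{i=0}^{n}(a_1\ot\cdots\ot a_i)\ot(a_{i+1}\ot\cdots\ot a_n),
\]
with counit the projection onto $\Omega_0(A)=R$. Coassociativity is immediate by reindexing the two nested cuts, and the grading by tensor length together with $\Omega_0(A)=R$ gives the connected graded structure. The substantive step is to verify that $d_\bullet$ is a coderivation, i.e. $\Delta\circ d=(d\ot\dr{Id}+\dr{Id}\ot d)\circ\Delta$ with the appropriate Koszul signs; this is done by comparing, for each cut position $i$, the inner product $a_ia_{i+1}$ against the two ways it may sit relative to the cut, the signs being arranged precisely so that the terms where a product meets the boundary of a block cancel. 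Since $d$ is a coderivation it descends to homology, and $T(A)=\op_{n\geq0}T_n(A)$ becomes a connected graded $R$-coring.

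The assertion for $E(C)$ then follows by transposing the above. The cochain complex $(\Omega^\bullet(C),d^\bullet)$ is the cobar complex dual to $\beta_\bullet^l$ and computes $\dr{Ext}^\bullet_C(R,R)$; I would equip it with the concatenation product
\[
(c_1\ot\cdots\ot c_m)\cdot(c_1'\ot\cdots\ot c_n')=c_1\ot\cdots\ot c_m\ot c_1'\ot\cdots\ot c_n',
\]
with unit $1\in\Omega^0(C)=R$. Associativity is clear, and the graded Leibniz rule $d^\bullet(x\cdot y)=d^\bullet(x)\cdot y+(-1)^{|x|}\,x\cdot d^\bullet(y)$ — the exact dual of the coderivation computation — shows $d^\bullet$ is a derivation, so the product passes to cohomology and makes $E(C)=\op_{n\geq0}E^n(C)$ a connected graded $R$-ring. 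The main obstacle throughout is the sign bookkeeping in these (co)derivation identities: once the cut at position $i$ is aligned with the (co)multiplication at position $i$, the signs $(-1)^{i-1}$ force the required cancellation, but this alignment is the one genuine computation, and by duality it need only be carried out once.
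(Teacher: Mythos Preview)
Your outline is correct and follows the standard argument: derive $\Omega_\bullet(A)$ from the normalised bar resolution by applying $R\ot_A-$, equip it with deconcatenation, and check the coderivation identity (dually for $\Omega^\bullet(C)$ with concatenation and the Leibniz rule). There is nothing to fault in the mathematics.

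However, there is no proof in the paper to compare against. The passage you are proving is a \texttt{fact} environment in the Preliminaries, and the paper explicitly treats it as background: it writes ``We will be brief in doing this and we indicate \cite[\S 1.4]{ms} and \cite[\S 1.5 and \S 1.15]{jps} for further details'' and ``It is well known the fact that the normalised bar complex is a DG coalgebra\dots''. The paper supplies no argument whatsoever, deferring entirely to the cited references. So your proposal is not an alternative to the paper's proof; it is a proof of a statement the paper chose to quote rather than prove.
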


\begin{fact}[The Shriek (Co)Ring.]
Using the notation and the notions from \cite[\S 1.2]{ms}, for and $R$-bimodule $M$ and a sub-bimodule $N \seq M \ot M$, define the $R$-ring $\langle M,N\rangle$ as being the quotient of the tensor algebra for the $R$-bimodule $M$ by the two-sided ideal generated by $N$, i.e. $T^a_R(M)/\langle N \rangle$. It is worth noting that $\langle N \rangle = \sum_{n \geq 0} \langle N \rangle ^n$, where this $n$-th degree component is obtained by:
\[ \langle N \rangle^n = \ds\sum_{i=1}^{n-1} M^{(i-1)} \ot N \ot M^{(n-i-1)}.\]	

Similarly, one could also define the graded $R$-coring $\{M,N\}$ by $\{M,N\}_0=R, \ \{M,N\}_1 = M$ and for all $n \geq 2$:
\[ \{M,N\}_n = \bigcap_{t=1}^{n-1} M^{(t-1)} \ot N \ot M^{(n-t-1)}.\]

We will be interested in a particular case of this construction. Namely, for a graded, connected $R$-ring $A$, consider the graded $R$-coring $\{A,\dr{Ker}\mu_{1,1}\}$, which will be denoted by $A^!$ and called \emph{the shriek coring of $A$}. Dually, for a graded connected $R$-coring $C$, consider $\langle C,\dr{Im}\Delta_{1,1}\rangle$, which we denote by $C^!$ and call \emph{the shriek ring of $C$}. For all positive integers $n$, the $n$-th degree components of these (co)rings will be denoted, for simplicity, by $A^!_n$ and $C^!_n$, respectively.

\end{fact}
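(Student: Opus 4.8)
The statement packages the two shriek constructions together with two structural claims that must be checked: that the degree-$n$ piece of the two-sided ideal $\langle N\rangle$ is $\ds\sum_{i=1}^{n-1} M^{(i-1)} \ot N \ot M^{(n-i-1)}$, and that the intersection-defined family $\{M,N\}_n$ really assembles into a graded $R$-coring. These two assertions are mutually dual under the pairing between the tensor algebra $T^a_R(M)=\op_{n\ge 0}M^{(n)}$ and the tensor coalgebra $T^c_R(M)=\op_{n\ge 0}M^{(n)}$, so the plan is to verify the ideal formula by a direct unwinding and then transport the same bookkeeping of indices through this duality to obtain the coring structure; specialising $N=\dr{Ker}\,\mu_{1,1}$ and $N=\dr{Im}\,\Delta_{1,1}$ at the end recovers $A^!$ and $C^!$.

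For the ideal, a homogeneous degree-$n$ element of $\langle N\rangle$ is an $R$-combination of products $u\ot\nu\ot v$ with $\nu\in N\seq M^{(2)}$, $u\in M^{(i)}$, $v\in M^{(j)}$ and $i+2+j=n$; summing over $i=0,\dots,n-2$ and reindexing $i\mapsto i-1$ gives precisely $\langle N\rangle^n=\ds\sum_{i=1}^{n-1}M^{(i-1)}\ot N\ot M^{(n-i-1)}$, and the inclusions $M\ot\langle N\rangle^{n}\seq\langle N\rangle^{n+1}\supseteq\langle N\rangle^{n}\ot M$ confirm that this graded sum is stable under multiplication, i.e. is an honest ideal. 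For the coring, I would equip $T^c_R(M)$ with its deconcatenation comultiplication, whose $(p,q)$-component is the canonical isomorphism $\Delta_{p,q}\colon M^{(n)}\xrar{\sim}M^{(p)}\ot M^{(q)}$, and show that $\{M,N\}=\op_n\{M,N\}_n$ is a graded subcoalgebra. Fixing $p+q=n$, distributivity lets me rewrite $\{M,N\}_p\ot M^{(q)}=\bigcap_{s=1}^{p-1}M^{(s-1)}\ot N\ot M^{(n-s-1)}$ and $M^{(p)}\ot\{M,N\}_q=\bigcap_{s=1}^{q-1}M^{(p+s-1)}\ot N\ot M^{(n-p-s-1)}$. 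Matching the constraint index of $\{M,N\}_n$ with $t\le p-1$ in the first case and with $t=p+s\ge p+1$ in the second shows $\{M,N\}_n\seq\bigl(\{M,N\}_p\ot M^{(q)}\bigr)\cap\bigl(M^{(p)}\ot\{M,N\}_q\bigr)$, so that $\Delta_{p,q}$ carries $\{M,N\}_n$ into $\{M,N\}_p\ot\{M,N\}_q$; coassociativity and the counit are then inherited from $T^c_R(M)$.

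The only genuine, if modest, obstacle is the pair of module-theoretic identities underlying these manipulations: the distributivity $\bigl(\bigcap_s X_s\bigr)\ot M^{(q)}=\bigcap_s\bigl(X_s\ot M^{(q)}\bigr)$ used to rewrite $\{M,N\}_p\ot M^{(q)}$, and the factorisation $\bigl(\{M,N\}_p\ot M^{(q)}\bigr)\cap\bigl(M^{(p)}\ot\{M,N\}_q\bigr)=\{M,N\}_p\ot\{M,N\}_q$. Both identities can fail over an arbitrary base ring, and this is exactly where the standing hypothesis that $R$ is semisimple is essential: every $R$-module is then projective, hence flat, so the relevant tensor functors are exact and commute with the finite intersections appearing above. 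The same flatness simultaneously guarantees that the formula for $\langle N\rangle^n$ describes a genuine internal direct-sum grading of the ideal, so that the dual argument is fully justified and the two shriek objects are well defined.
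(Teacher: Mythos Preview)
Your proposal is correct, but it is worth noting that in the paper this block is not a proposition with a proof at all: it is a definitional paragraph (labelled as a ``fact'') that simply records the constructions $\langle M,N\rangle$ and $\{M,N\}$ and the specialisations $A^!$, $C^!$, deferring all verification to \cite[\S 1.2]{ms}. There is no argument in the paper to compare yours against.

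What you have written is therefore strictly more than the paper supplies: you actually check the two implicit structural claims (the grading of the ideal $\langle N\rangle$ and the subcoring property of $\{M,N\}$ inside the tensor coalgebra). Your verification is sound. The ideal computation is immediate, and for the coring side your index-matching argument correctly shows $\{M,N\}_n\subseteq\bigl(\{M,N\}_p\ot M^{(q)}\bigr)\cap\bigl(M^{(p)}\ot\{M,N\}_q\bigr)$ by splitting the constraints $t\le p-1$ and $t\ge p+1$; the constraint $t=p$ is simply discarded, which is harmless since you only need an inclusion. You are also right to flag that the identities $\bigl(\bigcap_s X_s\bigr)\ot M^{(q)}=\bigcap_s\bigl(X_s\ot M^{(q)}\bigr)$ and $(X\ot M^{(q)})\cap(M^{(p)}\ot Y)=X\ot Y$ are the only nontrivial points, and that semisimplicity of $R$ (hence flatness of every module) is precisely what makes them go through. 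The paper takes all of this for granted by citation; your write-up makes the dependence on the standing hypothesis explicit, which is a genuine expository gain.
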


\begin{fact}[The Koszul Complexes.] \label{sec:ksz-com}
As mentioned in the introduction, the Koszulity of an almost Koszul pair is conditioned by the exactness of any one of six Koszul complexes. We will present the construction of one of them, then explain how the others can be defined.

Let $(A,C)$ be an almost-Koszul pair. Define a complex of graded left $A$-modules $(\dr{K}^l_\bullet(A,C), \ d^l_\bullet)$ as follows. Let $\dr{K}_n^l(A,C) = A \ot C_n$, for all $n \geq 0$ and the maps $d_0^l(a \ot c) = \pi_A^0(a)c$, where $\pi_A^t : A \to A^t$ is the canonical projection on the homogeneous component of degree $t$. For $n \geq 1$, the differential is defined by the following equation:
\[ d_n^l(a \ot c) = \sum a \theta_{C,A}(c_{1,1}) \ot c_{2,n-1}.\]

Working over the opposite structure, i.e. considering the almost-Koszul pair $(A^{op},C^{op})$, one can construct a complex of graded right $A$-modules $(\dr{K}_\bullet^r(A,C), \ d_\bullet^r)$ whose $n$-th degree component is $\dr{K}_n^r(A,C) = C_n \ot A$. Also, these two complexes could be put together to form a complex of graded $A$-bimodules $(\dr{K}_\bullet(A,C),\ d_\bullet)$ for which $\dr{K}_\bullet(A,C) = A \ot C_n \ot A$ and $d_\bullet = d_n^l \ot \dr{Id}_A + (-1)^n \dr{Id}_A \ot d_n^r$.

By duality, one can construct three complexes of graded left (right, bi-) $C$-comodules and using \cite[Theorem 2.3]{jps}, infer that if any of the six complexes is exact, then all of them are so. If this is the case, the pair $(A,C)$ is called a \emph{Koszul pair}. 

Theorem 2.13 in \cite{jps}, Theorems 2.1 and 3.4 in \cite{ms} prove that this notion agrees with the ``classical'' (cf. \cite{bgs}) notion of Koszulity, in the sense that a Koszul pair consists of a Koszul $R$-ring $A$ and a Koszul $R$-coring $C$. Moreover, by \cite[Corollary 2.5]{jps}, the complexes provide appropriate resolutions for $A$ and $C$, respectively.
\end{fact}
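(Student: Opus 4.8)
The bulk of this block is definitional or cites \cite{jps,ms,bgs}; the one assertion that genuinely requires verification is that $(\dr{K}^l_\bullet(A,C),d^l_\bullet)$ is a well-defined complex of graded left $A$-modules. Once this is established, the remaining five complexes are produced by symmetry and duality, and the exactness, Koszulity and resolution statements are imported verbatim. The plan is therefore to check $d^l_{n-1}\circ d^l_n=0$ directly from the almost-Koszul relation \eqref{ec:theta}, to record the left $A$-linearity and $R$-balancedness of each $d^l_n$, and then to transport the construction to the other settings.

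First I would verify that each $d^l_n$ is a morphism of graded left $A$-modules. Left $A$-linearity is immediate from the formula $d^l_n(a\ot c)=\sum a\,\theta_{C,A}(c_{1,1})\ot c_{2,n-1}$, since left multiplication by any $b\in A$ is absorbed into the leftmost tensorand. That the map is balanced over $R$ follows because $\theta_{C,A}$ is a morphism of $R$-bimodules and $\Delta_{1,n-1}$ is $R$-bilinear, so the formula descends to the tensor products over $R$; homogeneity is clear, as $d^l_n$ lowers the $C$-degree and raises the $A$-degree by one.

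The central step is the identity $d^l_{n-1}\circ d^l_n=0$. Applying $d^l_n$ and then $d^l_{n-1}$ to a generator $a\ot c$ with $c\in C_n$ gives
\[
d^l_{n-1}\big(d^l_n(a\ot c)\big)=\sum a\,\theta_{C,A}(c_{1,1})\,\theta_{C,A}(c_{2,1})\ot c_{3,n-2},
\]
where $\sum c_{1,1}\ot c_{2,1}\ot c_{3,n-2}$ is the iterated comultiplication $C_n\to C_1\ot C_1\ot C_{n-2}$. By coassociativity this iterated split factors as $(\dr{Id}_{C_1}\ot\Delta_{1,n-2})\circ\Delta_{1,n-1}$ and equally as $(\Delta_{1,1}\ot\dr{Id}_{C_{n-2}})\circ\Delta_{2,n-2}$; writing $\Delta_{2,n-2}(c)=\sum c'_{1,2}\ot c'_{2,n-2}$ and using the second factorisation, the expression becomes
\[
\sum a\,\mu^{1,1}\big((\theta_{C,A}\ot\theta_{C,A})\,\Delta_{1,1}(c'_{1,2})\big)\ot c'_{2,n-2}.
\]
The inner composite is precisely the map of \eqref{ec:theta}, which vanishes on $C_2$, so the whole expression is zero. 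This is the only place where the almost-Koszul hypothesis enters, and isolating it via coassociativity is the single point demanding care in the Sweedler bookkeeping.

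Finally I would obtain the other five complexes and close by citation. The right complex $(\dr{K}^r_\bullet(A,C),d^r_\bullet)$ arises by running the same computation for the opposite almost-Koszul pair $(A^{op},C^{op})$; the bimodule complex is $A\ot C_n\ot A$ with $d_n=d^l_n\ot\dr{Id}_A+(-1)^n\dr{Id}_A\ot d^r_n$, where a short check shows the two $d^l d^l$ and $d^r d^r$ contributions vanish by the left and right cases already treated, the prescribed sign $(-1)^n$ being exactly what is needed for the two cross-terms to cancel. Dualising the entire construction, that is exchanging the roles of the ring $A$ and the coring $C$, produces the three complexes of graded left, right and bi-comodules. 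The equivalence that exactness of any one of the six complexes forces exactness of all of them is \cite[Theorem 2.3]{jps}; the identification of a Koszul pair with a classical Koszul $R$-ring and $R$-coring is \cite[Theorem 2.13]{jps} together with \cite[Theorems 2.1, 3.4]{ms} and \cite{bgs}; and the statement that the complexes are resolutions in the Koszul case is \cite[Corollary 2.5]{jps}. The main obstacle is thus not to reprove these imported results but merely to confirm that the concrete complex constructed above is the one to which they apply.
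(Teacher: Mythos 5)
Your proposal is correct and agrees with the paper, which offers no proof of its own for this block: it is a definitional recall whose substantive claims are imported by precisely the citations you invoke (\cite[Theorems 2.3, 2.13, Corollary 2.5]{jps}, \cite[Theorems 2.1, 3.4]{ms}), and your one genuine verification --- that $d^l_{n-1}\circ d^l_n=0$ by using coassociativity to factor the iterated split $C_n\to C_1\ot C_1\ot C_{n-2}$ through $\Delta_{2,n-2}$ and then killing the inner composite by \eqref{ec:theta} --- is exactly the standard check underlying the construction in \cite{jps}. The only cosmetic omission is the boundary case $d^l_0\circ d^l_1=0$, where $d^l_0$ has a different formula, but this is immediate since $a\,\theta_{C,A}(c)\in A A^1\subseteq A_+$ is annihilated by $\pi^0_A$.
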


\begin{fact}[Examples of (Almost) Koszul Pairs.] \label{sec:ex}
There are a few basic and important examples of (almost) Koszul pairs that are worth mentioning. These involve the shriek (co)rings that we introduced and also the $R$-coring $T(A)$ and the $R$-ring $E(C)$, for an (almost) Koszul pair $(A,C)$.

Thus, \cite[Proposition 1.18]{jps} proves that starting with any strongly graded, connected $R$-ring $A$, there is an almost Koszul pair $(A,T(A))$. As remarked in \S\ref{sec:bar}, $T(A)$ is a DG coring and the map $\theta_{T(A),A} : T_1(A) \to A^1$ is induced by the projection $A_+ \to A^1$ (note that $A$ being strongly graded implies that $T_1(A) = A_+/A_+^2 \simeq A^1$).

By duality, the same result shows that if $C$ is any strongly graded, connected $R$-coring, the pair $(E(C),C)$ is almost Koszul. The $R$-ring structure of $E(C)$ is presented in \S\ref{sec:bar} and the structural isomorphism $\theta_{C,E(C)} : C_1 \to E^1(C)$ acts as $\theta_{C,E(C)}(c) = c+ C_0 \in C/C_0 \simeq C_+$, since $E^1(C) = \dr{Ker}\Delta_+$.

The shriek structures also provide immediate examples of almost Koszul pairs. Using the fact that $A^!_1 = A^1$, take $\theta_{A^!,A} = \dr{Id}_{A^1}$ and note that $A^!_2 = \dr{Ker}\mu^{1,1}$, so the condition (\ref{ec:theta}) is trivially satisfied and the pair $(A,A^!)$ is almost Koszul.

Dually, the pair $(C^!,C)$ is almost Koszul as well with respect to the isomorphism $\theta_{C,C^!} = \dr{Id}_{C_1}$, which satisfies the equation (\ref{ec:theta}), as $C_2^! = (C_1 \ot C_1)/\dr{Im}\Delta_{1,1}$.

To end this section, we remark that Theorems 2.1 and 3.4 in \cite{ms} provide necessary and sufficient conditions for these pairs to be Koszul.
\end{fact}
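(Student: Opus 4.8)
The statement of the final \textbf{Fact} collects four assertions, each of the form ``a certain pair $(A,C)$ is almost Koszul,'' together with a closing remark on Koszulity. My plan is to treat all four uniformly: in each case I would exhibit the claimed structural isomorphism $\theta$ between the degree-one components and then verify that the composition \eqref{ec:theta} vanishes. I would split the work into the two essentially formal shriek examples and the two bar-complex examples $(A,T(A))$ and $(E(C),C)$, where the only genuine content is the correct identification of the structural maps.

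For the pair $(A,A^!)$ I would argue purely from the construction of the shriek coring $\{A,\mathrm{Ker}\,\mu^{1,1}\}$. By definition its degree-one part is $A^!_1=A^1$ and its degree-two part is $A^!_2=\mathrm{Ker}\,\mu^{1,1}\subseteq A^1\ot A^1$, and the comultiplication component $\Delta_{1,1}\colon A^!_2\to A^!_1\ot A^!_1$ is precisely the defining inclusion $\mathrm{Ker}\,\mu^{1,1}\hookrightarrow A^1\ot A^1$. Taking $\theta_{A^!,A}=\mathrm{Id}_{A^1}$, the composition in \eqref{ec:theta} is nothing but $\mu^{1,1}$ restricted to $\mathrm{Ker}\,\mu^{1,1}$, hence zero by definition. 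The pair $(C^!,C)$ is handled by the exactly dual computation: here $C^!_2=(C_1\ot C_1)/\mathrm{Im}\,\Delta_{1,1}$, the multiplication $\mu^{1,1}$ of $C^!$ is the canonical projection, and with $\theta_{C,C^!}=\mathrm{Id}_{C_1}$ the composition is the projection of $\mathrm{Im}\,\Delta_{1,1}$ onto its own cokernel, again zero.

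For $(A,T(A))$ with $A$ strongly graded and connected, I would first unwind the normalised bar complex. Since $d_1=0$ and $d_2=\mu_+$, one gets $T_1(A)=A_+/A_+^2$, and strong gradedness forces $A_+^2=\bigoplus_{n\geq 2}A^n$, so that $T_1(A)\simeq A^1$; this yields $\theta_{T(A),A}$ as the map induced on $T_1(A)$ by the projection $A_+\to A^1$. The substantive step is the vanishing of \eqref{ec:theta}, which requires understanding the induced DG-coring comultiplication $\Delta_{1,1}$ on $T_2(A)$ and checking that lifting a class to $A_+\ot A_+$, applying $\theta\ot\theta$ and then $\mu^{1,1}$ produces an element already identified with zero in the quotient. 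This is the computation of \cite[Proposition 1.18]{jps}, which I would either invoke directly or reproduce as a short diagram chase on representative tensors. The pair $(E(C),C)$ is the cohomological mirror: one identifies $E^1(C)=\mathrm{Ker}\,\Delta_+$, reads off $\theta_{C,E(C)}$ from $C_1\hookrightarrow C_+\to E^1(C)$, and dualises the vanishing argument.

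I expect the main obstacle to be the $T(A)$ and $E(C)$ examples rather than the shriek ones, and there the difficulty is entirely bookkeeping: pinning down the degree-two (co)multiplication on the homology, respectively cohomology, of the bar complex, and confirming that the strongly-graded hypothesis upgrades the degree-one comparison to an isomorphism. Once these structural maps are correctly identified, \eqref{ec:theta} collapses to a definitional identity in every case, and the final Koszulity refinements follow by quoting the necessary-and-sufficient criteria of \cite[Theorems 2.1 and 3.4]{ms}.
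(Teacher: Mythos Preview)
Your proposal is correct and follows essentially the same approach as the paper: the shriek cases are dispatched by noting that $\Delta_{1,1}$ (resp.\ $\mu^{1,1}$) is the inclusion of $\mathrm{Ker}\,\mu^{1,1}$ (resp.\ the projection onto the cokernel of $\Delta_{1,1}$), so condition~\eqref{ec:theta} holds tautologically, while the $(A,T(A))$ and $(E(C),C)$ cases are handled by identifying $T_1(A)\simeq A^1$ and $E^1(C)\simeq C_1$ via strong gradedness and then invoking \cite[Proposition~1.18]{jps} for the vanishing. The paper embeds exactly these justifications in the body of the Fact rather than in a separate proof environment, so your write-up is simply a slightly more detailed version of what appears there.
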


\section{The $\dr{Ext}$ and Convolution Rings}

All of the $R$-rings and corings considered are left locally finite, so we can work with left graded duals without any further assumptions.

Thus, let $(A,C)$ be an almost Koszul pair, with $\theta := \theta_{C,A}$ being the structural isomorphism. Recall that $\dr{K}^l_\bullet(A,C) = A \ot C_\bullet$ is the Koszul complex (in the category of left $A$-modules) of the $R$-ring $A$ and $\beta^l_\bullet(A) = A \ot A_+^\bullet$ is the left normalised bar resolution for $A$. Define $\phi_\bullet : \dr{K}^l_\bullet(A,C) \to \beta_\bullet^l(A)$ as $\phi_{-1}=\dr{Id}_R$, the identity on $A \simeq A \ot C_0 = \dr{K}_0^l(A,C) = \beta_0^l(A)$ and for all $n \geq 1$ and $a \in A, \ c \in C_n$:
\[ \phi : \dr{K}_n^l(A,C) \to \beta_n^l(A), \qquad \phi_n(a \ot c) = \sum a \ot\theta(c_{1,1}) \ot \theta(c_{2,1}) \ot \dots \ot \theta(c_{n,1}).\]

With this definition, using \cite[Proposition 1.24]{jps} we conclude that $\phi_\bullet$ is a morphism of complexes, that lifts the identity of $R$.

Let $M$ be a left $R$-module. Recall that, by definition, we have $\Omega^n(A,M) = \dr{Hom}_R(A_+^n,M)$ and $\dr{K}_l^n(A,M) = \dr{Hom}_R(C_n,M)$, for any $n \geq 0$. Thus, by applying the contravariant functor $\dr{Hom}_A(-,M)$ to $\phi_\bullet$ and using the adjunction theorem, we get a morphism of complexes:

\[ \phi^\bullet : \Omega^\bullet(A,M) \to \dr{K}_l^\bullet(A,M),\]
which acts on $f : A^{(n)}_+ \to M$ and $c \in C_n$ as: 
\[\phi(f)(c) = f \big( \sum \theta(c_{1,1}) \ot \theta(c_{2,1}) \ot \dots \ot \theta(c_{n,1})\big).\]

%
%

The particular case that we are interested in is when $M=R$. Let us note that the differential $\partial^n$ of the first complex is null. Indeed, since the action of $A$ on $R$ is trivial, for $c \in C_{n+1}$ and $f \in \dr{Hom}_R(C_n,R)$, we have:
\[ \partial ^{n}(f)(c)=\sum\limits \theta
(c_{1,1})f(c_{2,n}).\]
In particular, taking the cohomology it follows that $\phi^\bullet$ induces a map from $\lgrr C = \op_{n\geq 0} \ \dr{Hom}_R(C_n,R)$ to $\dr{Ext}^\bullet_A(R,R) = \kal{E}(A)$.  

Now we can prove the following result.

\begin{proposition} \label{te:yon}
Let $(A,C)$ be an almost Koszul pair. There exists a canonical morphism of graded $R$-rings from $\kal{E}(A) = \dr{Ext}^\bullet_A(R,R)$ with the cup product to the left graded dual of $C$ with the graded convolution product.
\end{proposition}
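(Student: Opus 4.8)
The plan is to exhibit the asserted morphism as the map induced on cohomology by the cochain map $\phi^\bullet$, and then to upgrade it from a morphism of graded $R$-modules to one of graded $R$-rings by transporting the relevant comultiplicative structures through $\phi_\bullet$. First I would fix the underlying additive map. Applying $\dr{Hom}_A(-,R)$ to the chain map $\phi_\bullet\colon\dr{K}_\bullet^l(A,C)\to\beta_\bullet^l(A)$ yields the cochain map $\phi^\bullet\colon\Omega^\bullet(A,R)\to\dr{K}_l^\bullet(A,R)$ described above. Since the action of $A$ on $R$ is trivial, the differential of $\dr{K}_l^\bullet(A,R)$ vanishes, so $\dr{H}^\bullet\big(\dr{K}_l^\bullet(A,R)\big)=\dr{K}_l^\bullet(A,R)=\op_{n\ge 0}\dr{Hom}_R(C_n,R)=\lgrr C$ as graded $R$-modules, while $\dr{H}^\bullet\big(\Omega^\bullet(A,R)\big)=\kal{E}(A)$. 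As $\phi^\bullet$ is a cochain map into a complex with zero differential, it sends coboundaries to $0$ and hence descends to a well-defined graded $R$-module morphism $\Phi\colon\kal{E}(A)\to\lgrr C$ with $\Phi([f])(c)=f\big(\sum\theta(c_{1,1})\ot\cdots\ot\theta(c_{n,1})\big)$ for a cocycle $f\in\dr{Hom}_R(A_+^n,R)$ and $c\in C_n$. The unit is preserved because $\phi_{-1}=\dr{Id}_R$.

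The substance of the proof is multiplicativity, and here I would make explicit that both products are convolutions dual to comultiplications. The cup product on $\kal{E}(A)$ is induced by the deconcatenation diagonal of the DG coalgebra $\beta_\bullet^l(A)$, so that on representatives
\[(f\cup g)(a_1\ot\cdots\ot a_{m+n})=f(a_1\ot\cdots\ot a_m)\,g(a_{m+1}\ot\cdots\ot a_{m+n}),\]
whereas the graded convolution product on $\lgrr C$ is induced by the comultiplication of $C$, namely $(\xi\ast\eta)(c)=\sum\xi(c_{1,m})\,\eta(c_{2,n})$ for $\xi\in\dr{Hom}_R(C_m,R)$, $\eta\in\dr{Hom}_R(C_n,R)$ and $c\in C_{m+n}$. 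The key claim is that $\phi_\bullet$ is a morphism of DG coalgebras: it intertwines the Koszul diagonal $\Delta^K(a\ot c)=\sum_i(a\ot c_{1,i})\ot(1\ot c_{2,n-i})$ with the bar diagonal $\Delta^\beta$. Granting this, dualising converts these comultiplications into the two products above and gives $\phi^\bullet(f\cup g)=\phi^\bullet(f)\ast\phi^\bullet(g)$ at the level of cochains, whence $\Phi(x\cup y)=\Phi(x)\ast\Phi(y)$ after passing to cohomology.

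To establish the coalgebra compatibility I would compute $\Delta^\beta\circ\phi$ and $(\phi\ot\phi)\circ\Delta^K$ on $a\ot c$ and reduce the comparison to the coassociativity of $\Delta_C$. Since $\phi_n(a\ot c)=\sum a\ot\theta(c_{1,1})\ot\cdots\ot\theta(c_{n,1})$ is built from the iterated comultiplication $C_n\to C_1^{\ot n}$, splitting that iteration after the $i$-th factor is exactly $\Delta_{i,n-i}$ followed by further iteration on each tensorand, which is precisely what the two expressions demand. Equivalently, at the level of $\Phi$ one may verify directly that
\[\big(\Phi(x)\ast\Phi(y)\big)(c)=\sum\Phi(x)(c_{1,m})\,\Phi(y)(c_{2,n})\]
coincides with $\Phi(x\cup y)(c)$ for $c\in C_{m+n}$, again by regrouping the $(m+n)$-fold comultiplication of $c$ as an $m$-fold one followed by an $n$-fold one.

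I expect the main obstacle to be essentially bookkeeping rather than conceptual: keeping the iterated Sweedler components and the signs attached to the two diagonals consistent, and checking that the convolution product inherited by $\dr{K}_l^\bullet(A,R)$ from the Koszul diagonal is \emph{literally} the $R^{op}$-ring structure of $\lgrr C$ (including any order reversal coming from the passage to $R^{op}$) and not merely isomorphic to it. Once multiplicativity, compatibility with the grading, and preservation of the unit are verified, $\Phi$ is the desired canonical morphism of graded $R$-rings from $\kal{E}(A)$ with the cup product to $\lgrr C$ with the graded convolution product.
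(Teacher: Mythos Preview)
Your proposal is correct and follows essentially the same route as the paper: show that the chain map $\phi_\bullet$ becomes a morphism of DG corings after applying $(-)\ot_A R$, dualize to obtain a DG ring map $\phi^\bullet\colon\Omega^\bullet(A,R)\to\lgrr C$, and pass to cohomology. The only organizational difference is that the paper isolates the dualization step as a standalone lemma (that $\lgrr(-)$ is a functor from DG corings to DG rings), whereas you carry out the coalgebra compatibility and the convolution computation directly; your closing caveat about the $R^{op}$ ordering in the convolution formula is exactly the point that needs care, since the paper's product is $(\alpha\ast\beta)(c)=\sum\alpha(c_{1,n}\,\beta(c_{2,m}))$ rather than a naive juxtaposition in $R$.
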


In order to show this, we first proceed with a lemma.

\begin{lemma} \label{lema-dgcor}
\begin{enumerate}[(a)]
\item Let $C$ be a differential graded (DG) coring. Then $\lgrr C$ is a DG ring.
\item If $f : C \to D$ is a morphism of DG corings, then its transpose ${^{\ast}} f : \lgrr D \to \lgrr C$ is a morphism of DG rings.
\end{enumerate}

In other words, $\lgrr (-)$ is a functor from the category of DG corings to the category of DG rings.
\end{lemma}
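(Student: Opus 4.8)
The plan is to verify directly that the graded linear dual functor $\lgrr(-)$ carries the differential structure of a DG coring to that of a DG ring, and that it is functorial. The key observation is that the left graded dual $\lgrr C = \op_{n \geq 0} {^\ast\!}(C_n)$ already carries a graded $R^{op}$-ring structure via the \emph{convolution product}, obtained by transposing the comultiplication: for $f \in {^\ast\!}(C_m)$ and $g \in {^\ast\!}(C_n)$, the product $f \ast g \in {^\ast\!}(C_{m+n})$ is the composition $C_{m+n} \xrar{\Delta_{m,n}} C_m \ot C_n \xrar{f \ot g} R \ot R \to R$. This multiplicative structure is established in \cite[\S 4.1-4.2]{ms}, so what remains is entirely the compatibility with the differentials.

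For part (a), I would begin by writing $d_C : C \to C$ for the coring differential, which is homogeneous of degree $-1$, i.e.\ $d_C(C_n) \seq C_{n-1}$, and satisfies $d_C^2 = 0$ together with the co-Leibniz rule $\Delta \circ d_C = (d_C \ot \dr{Id} + \dr{Id} \ot d_C) \circ \Delta$. The induced differential on the dual is the transpose ${^\ast\!}d_C =: d$, which raises degree by $1$ and automatically satisfies $d^2 = 0$ since transposition is contravariant and $(d_C^2)^\ast = (d_C^\ast)^2$. The heart of the argument is to check the graded Leibniz rule $d(f \ast g) = d(f) \ast g + (-1)^{|f|} f \ast d(g)$. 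This is obtained by transposing the co-Leibniz identity: applying both sides to an element of the appropriate homogeneous component of $C$ and unwinding the definitions of $\ast$ and of the transpose differential should reduce the Leibniz rule precisely to the dual of the co-Leibniz rule, with the sign $(-1)^{|f|}$ emerging from the graded transpose convention. I would present this as a short computation evaluating $d(f \ast g)$ on $c \in C_{m+n+1}$ and matching it term by term against the dual co-Leibniz expression.

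For part (b), given a morphism $f : C \to D$ of DG corings, its transpose ${^\ast\!}f : \lgrr D \to \lgrr C$ is defined degreewise by precomposition. That ${^\ast\!}f$ is multiplicative (converts convolution to convolution) follows from $f$ being a coalgebra morphism, $\Delta_C \circ f = (f \ot f) \circ \Delta_D$ (appropriately on components $\Delta_{m,n}$), which is again a routine transposition; this part is essentially the statement that $\lgrr(-)$ is already a functor to graded rings, recorded in \cite{ms}. The new content is that ${^\ast\!}f$ commutes with the differentials, which is the transpose of the identity $f \circ d_C = d_D \circ f$ expressing that $f$ is a chain map. Functoriality on the nose—preservation of identities and composition—is immediate from the contravariance of transposition.

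The main obstacle I anticipate is bookkeeping rather than conceptual: getting the \emph{signs} and the \emph{variance} exactly right. Because $\lgrr(-)$ is contravariant and the convolution product reverses the order of the $R$-module structures (producing an $R^{op}$-ring, as noted just before the statement), one must be careful that the graded Leibniz sign $(-1)^{|f|}$ is placed on the correct factor and that the opposite multiplication does not introduce a spurious reversal in the Koszul sign. I would fix conventions for the graded transpose at the outset—declaring that for a degree-$k$ map $\psi$ the transpose acts with sign $(-1)^{k|f|}$ on a homogeneous $f$—so that the computation in part (a) closes cleanly; once the conventions are pinned down, both parts are direct dualisations of the defining axioms of a DG coring.
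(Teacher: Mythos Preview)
Your proposal is correct and follows essentially the same route as the paper: both defer to \cite{ms} for the graded $R^{op}$-ring structure on $\lgrr C$, verify the graded Leibniz rule for the transposed differential by a direct computation that unwinds the co-Leibniz identity on an element $c \in C_{m+n+1}$, and obtain part (b) by transposing $f \circ d_C = d_D \circ f$. One minor caution: over a noncommutative base $R$ the convolution product is $(\alpha \ast \beta)(c) = \sum \alpha\big(c_{1,n}\,\beta(c_{2,m})\big)$ rather than the simpler composite $C_{m+n} \to C_m \ot C_n \xrightarrow{f \ot g} R \ot R \to R$ you wrote, since $f \ot_R g$ is not well-defined for two \emph{left} $R$-linear maps---but as you note, the correct formula is supplied by \cite{ms}, and the rest of your argument goes through unchanged.
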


\begin{proof}
(a) We know by \cite[\S 4.2]{ms} that the left graded dual of an $R$-ring is an $R^{op}$-coring. Thus, we are left with proving that the multiplication of $\lgrr C$ is compatible with the differential. Hence, if $d$ denotes the differential of $C$, then ${^\ast}d$, its transpose, is the differential of $\lgrr C$. We know that $d$ is a coderivation, hence $\Delta d(c) =\sum d (c_{1,n})\ot c_{2,m} +(-1)^n\sum c_{1,n} \ot d(c_{2,m})$, where $\Delta$ is the comultiplication of $C$ and $c\in C_{n+m+1}$. We have to show that:
\[ (^\ast d)(\alpha \ast \beta) = ({^\ast} d\alpha) \ast \beta + (-1)^n \alpha \ast ({^\ast}d\beta),\]
for all $\alpha \in \dr{Hom}_R(C_n,R), \ \beta \in \dr{Hom}_R(C_m,R)$. Furthermore, we recall the convolution product of $\lgrr C$ that is defined by:
\[ (\alpha \ast \beta)(c) = \sum \alpha(c_{1,n}  \beta(c_{2,m})),\] 
for any $\alpha,\beta$ as above and $c \in C_{n+m}$.

Indeed, for $c\in C_{n+m+1}$, using the notations above, we compute the following:
\[ (({^\ast}d \alpha) \ast \beta)(c) = \sum {^\ast}d\alpha(c_{1,n+1}\beta(c_{2,m})) = \sum\alpha(d(c_{1,n+1})\beta(c_{2,m})).\]
The other summand is:
\[ (\alpha \ast ({^\ast}d\beta))(c) = \sum \alpha(c_{1,n}({^\ast}d\beta)(c_{2,m+1}))) = \sum \alpha (c_{1,n} \beta(d(c_{2,m+1}))).\]
Finally,
\[({^\ast}d(\alpha \ast \beta))(c) = (\alpha \ast \beta)(dc) = \sum \alpha(d(c_{1,n+1})\beta(c_{2,m})) + (-1)^n \sum \alpha (c_{1,n} \beta(d(c_{2,m+1}))).\]
The first equality follows by the definition of ${^\ast}d$ and the second, by the coderivation property of $d$ and the definition of the convolution product. 

(b) The second part of the lemma is easily proved using the definitions. We already know that the transpose of a graded coring map is a graded ring map, so we have to show that it is also compatible with the differentials. Let $d_C$ and $d_D$ be the corresponding differentials of the DG corings $C$ and $D$, respectively. Then $f$ being a morphism of DG corings implies that $f\circ d_C = d_D \circ f$. Using only the definitions of the transposed maps we obtain that ${^\ast f} \circ {^\ast d_D} = {^\ast}d_C \circ {^\ast}f$, as needed.
\end{proof}

Now we can prove the Proposition \ref{te:yon}. 

\begin{proof}
Recall that there is a morphism of complexes:
\[ \phi_\bullet : \dr{K}_\bullet^l (A,C) \to \beta_\bullet^l(A,C).\] 

By deleting the component of degree -1 and applying the functor $(-) \ot_A R$, this induces a morphism (which we will still denote by $\phi_\bullet$) of DG corings:
\[ \phi_\bullet : \dr{K}^l_\bullet(A,R) \to \Omega_\bullet(A).\]

But as remarked previously, the first DG coring is $C_\bullet$ with the trivial differential. Then, by the first part of Lemma \ref{lema-dgcor}, it follows $\lgrr C$ is a DG ring. The second part of the Lemma \ref{lema-dgcor} and the identification $\lgrr \Omega_\bullet(A) = \Omega^\bullet(A,R)$ complete the proof by passing to cohomology.
\end{proof}

%
\begin{theorem} \label{cor:yon}
If the pair $(A,C)$ is Koszul, then $\phi^\bullet$ is invertible and it induces an isomorphism
\[ \kal{E}(A) \simeq \lgr T(A).\]
\end{theorem}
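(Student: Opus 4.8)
The plan is to reduce the statement to a single homological fact: that the comparison map $\phi_\bullet$ is a homotopy equivalence. Proposition \ref{te:yon} already supplies a morphism of graded $R$-rings $\kal{E}(A) \to \lgrr C$ (from the cup product to the convolution product), realised on cochains by $\phi^\bullet \colon \Omega^\bullet(A,R) \to \dr{K}_l^\bullet(A,R)$; so the remaining content of the theorem is the \emph{invertibility} of this map, together with the reinterpretation of the target $\lgrr C$ as $\lgr T(A)$. I would therefore first argue that $\phi_\bullet \colon \dr{K}^l_\bullet(A,C) \to \beta^l_\bullet(A)$ is a homotopy equivalence of complexes of projective left $A$-modules, and then transport this through the functors $\dr{Hom}_A(-,R)$ and $R \ot_A -$.

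For the homotopy equivalence, note first that every term is $A$-projective: since $R$ is semisimple, each $C_n$ and each $A_+^{(n)}$ is a projective $R$-module, whence $A \ot C_n$ and $A \ot A_+^{(n)}$ are projective left $A$-modules. The bar complex $\beta^l_\bullet(A)$ is a resolution of $R$ by construction, while $\dr{K}^l_\bullet(A,C)$ is a resolution precisely because $(A,C)$ is Koszul (Fact \ref{sec:ksz-com}, i.e. \cite[Corollary 2.5]{jps}). As $\phi_\bullet$ is a morphism of complexes lifting $\dr{Id}_R$ (\cite[Proposition 1.24]{jps}), the comparison theorem for projective resolutions forces any reverse lift to be a two-sided homotopy inverse; hence $\phi_\bullet$ is a homotopy equivalence. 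Applying the contravariant additive functor $\dr{Hom}_A(-,R)$, which preserves chain homotopies, shows that $\phi^\bullet$ is itself a homotopy equivalence, so it is invertible up to homotopy and induces an isomorphism on cohomology. The source computes $H^\bullet\big(\Omega^\bullet(A,R)\big) = \kal{E}(A)$, while on the target the differential vanishes (this is the computation $\partial^n(f)(c) = \sum \theta(c_{1,1}) f(c_{2,n}) = 0$, since $A_+$ acts trivially on $R$), so $H^\bullet\big(\dr{K}_l^\bullet(A,R)\big) = \lgrr C$. Thus $\phi^\bullet$ induces an isomorphism $\kal{E}(A) \xrightarrow{\ \sim\ } \lgrr C$, which by Proposition \ref{te:yon} and Lemma \ref{lema-dgcor} is one of graded $R$-rings.

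It remains to replace $\lgrr C$ by $\lgr T(A)$. Here I would apply the right-exact functor $R \ot_A -$ to the same homotopy equivalence $\phi_\bullet$: it yields a homotopy equivalence $C_\bullet \to \Omega_\bullet(A)$ (the first complex carrying the trivial differential, as noted in the proof of Proposition \ref{te:yon}), and on homology this is exactly the canonical coring map $C \to T(A)$, now seen to be an isomorphism. Dualising the coring isomorphism $C \simeq T(A)$ and using that $\lgr(-)$ is a functor to $R^{op}$-rings gives $\lgrr C \simeq \lgr T(A)$ as graded rings, and composing with the previous paragraph produces the desired isomorphism $\kal{E}(A) \simeq \lgr T(A)$. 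Equivalently, one may identify the source directly: $\Omega^\bullet(A,R) = \lgr \Omega_\bullet(A)$ and exactness of $\lgr(-)$ over the semisimple $R$ give $H^\bullet\big(\Omega^\bullet(A,R)\big) = \lgr T(A)$ at once.

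The main obstacle I anticipate is the very first reduction: making rigorous that $\dr{K}^l_\bullet(A,C)$ is a genuine resolution — this is exactly the point where Koszulity is indispensable and where one must invoke the equivalence of the six Koszul complexes from \cite{jps} — and then checking that all the functorial manipulations ($\dr{Hom}_A(-,R)$, $R \ot_A -$, and $\lgr(-)$) carry the multiplicative (DG) structures along, so that the final bijection is an isomorphism of graded rings and not merely of graded $R$-bimodules. The homotopy-equivalence argument itself is formal once these structural compatibilities, already prepared by Proposition \ref{te:yon} and Lemma \ref{lema-dgcor}, are in place.
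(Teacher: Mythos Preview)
Your proof is correct and follows the same strategy as the paper's: both argue that $\phi_\bullet$ is a map between projective resolutions of $R$ lifting the identity, so that $\phi^\bullet$ is an isomorphism on cohomology, and then identify $\lgrr C$ with $\lgr T(A)$. Your version is slightly more careful --- you invoke the comparison theorem to obtain a genuine homotopy equivalence (rather than only a quasi-isomorphism, which would not a priori survive $\Hom_A(-,R)$) and you derive $C \simeq T(A)$ internally from the same homotopy equivalence instead of citing \cite[Theorem~2.9]{jps} as the paper does --- but the logical skeleton is identical.
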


\begin{proof}
Koszulity of the pair $(A,C)$ ensures that the map $\phi_\bullet$ is a quasi-isomorphism, as it is obtained from a map between the Koszul and bar resolutions, which lifts the identity of $R$. Hence the morphism $\phi^\bullet = \dr{H}_\bullet(\phi_\bullet,R)$ is bijective.
Using \cite[Theorem 2.9]{jps}, when $(A,C)$ is a Koszul pair, $C \simeq T(A)$ and $A \simeq E(C)$. Since both of the structures are left locally finite, taking left graded duals preserves the isomorphisms. Thus, we obtain the required result. \end{proof}

\begin{corollary} \label{cor:eea}
The isomorphism in the theorem can be further expanded to $\kal{E}(A) \simeq E(\lgrr A)$.
\end{corollary}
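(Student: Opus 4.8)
The plan is to factor the desired isomorphism through $\lgr T(A)$: Theorem \ref{cor:yon} already gives $\kal{E}(A) \simeq \lgr T(A)$, so it suffices to produce an isomorphism of graded $R$-rings $\lgr T(A) \simeq E(\lgrr A)$ and compose the two. This second isomorphism is exactly the content of \cite[Corollary 4.7]{ms}, so at the level of the write-up the statement follows immediately; what follows is how I would reconstruct that isomorphism directly from the (co)chain complexes of \S\ref{sec:bar}.

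First I would identify the positive part of the dual coring. Since $\lgrr A = \op_{n \geq 0} {}^\ast(A^n)$ and $A_+ = \op_{n>0} A^n$, local finiteness gives $(\lgrr A)_+ \simeq {}^\ast(A_+)$. The cobar complex computing $E(\lgrr A)$ then has terms $\Omega^n(\lgrr A) = ((\lgrr A)_+)^{(n)} \simeq ({}^\ast(A_+))^{(n)}$. Because everything in sight is left locally finite, the reversing duality map ${}^\ast N \ot {}^\ast M \to {}^\ast(M \ot N)$ is an isomorphism, and iterating it produces a natural identification $\Omega^n(\lgrr A) \simeq {}^\ast(\Omega_n(A)) = \lgrr \Omega_n(A)$. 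Under this identification the cobar differential $d^n$ of $\lgrr A$ is precisely the transpose of the bar differential $d_{n+1}$ of $A$; this is a direct comparison of the two formulas of \S\ref{sec:bar}, once one recalls that the comultiplication of $\lgrr A$ is, by \cite[\S 4.2]{ms}, the transpose of the multiplication of $A$.

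Next I would pass to (co)homology. The key point is that $R$ is semisimple, so every $R$-module is projective and $\dr{Hom}_R(-,R)$ is exact; consequently taking left graded duals commutes with taking homology. Therefore
\[ E^n(\lgrr A) = \Ho^n\big(\Omega^\bullet(\lgrr A)\big) \simeq \Ho^n\big(\lgrr \Omega_\bullet(A)\big) \simeq \lgrr\big(\Ho_n(\Omega_\bullet(A))\big) = \lgrr T_n(A),\]
which assembles into a graded isomorphism $E(\lgrr A) \simeq \lgr T(A)$. One then checks multiplicativity: the concatenation product on $E(\lgrr A)$ corresponds, under the reversing dualities above, to the convolution product on $\lgr T(A)$ dual to the coproduct of the coring $T(A)$, exactly the convolution product appearing in Lemma \ref{lema-dgcor}. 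Composing with Theorem \ref{cor:yon} yields $\kal{E}(A) \simeq E(\lgrr A)$.

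The main obstacle I expect is bookkeeping rather than genuine mathematics: one must keep track of the $R$-versus-$R^{op}$ structures and the reversal of tensor factors introduced by dualisation, and verify that the concatenation product on $E(\lgrr A)$ matches the convolution product on $\lgr T(A)$ under these reversals. Since all of this is precisely what is encapsulated in \cite[Corollary 4.7]{ms}, the cleanest final proof simply invokes that corollary together with Theorem \ref{cor:yon}.
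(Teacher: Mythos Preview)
Your proposal is correct and follows essentially the same approach as the paper: compose the isomorphism $\kal{E}(A)\simeq \lgr T(A)$ from Theorem~\ref{cor:yon} with the isomorphism $\lgr T(A)\simeq E(\lgrr A)$ from \cite[Corollary 4.7]{ms}. The paper's own proof is the one-line transitivity argument you describe at the end; your additional reconstruction of \cite[Corollary 4.7]{ms} via dualising the bar complex is sound but not needed here.
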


\begin{proof}
If $(A,C)$ is a Koszul pair, using \cite[Corollary 4.7]{ms}, we know that $\lgr T(A) \simeq E(\lgrr A)$. Using this isomorphism in the theorem, by transitivity, we obtain $\kal{E}(A) \simeq E(\lgrr A)$, as needed.
\end{proof}

We can formulate yet another result that characterises Koszulity in terms of some properties of the $\dr{Ext}$ ring. In fact, in a more general framework, the following result holds true:

\begin{lemma}
Let $R$ be a semisimple ring and $V, \ W$ be $R$-bimodules, which are finitely generated as left $R$-modules. A morphism of $R$-bimodules $f : V \to W$ is injective if and only if its left dual ${^\ast}\! f = \dr{Hom}_R(f,R)$ is surjective.
\end{lemma}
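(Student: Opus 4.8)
The plan is to deduce both implications from two structural consequences of the semisimplicity of $R$. The first is that the contravariant functor ${}^{\ast}(-) = \dr{Hom}_R(-,R)$ is exact on left $R$-modules: since $R$ is semisimple it is injective as a left module over itself, equivalently every short exact sequence of left $R$-modules splits. The second is the nonvanishing property that ${}^{\ast}M \neq 0$ for every nonzero, finitely generated left $R$-module $M$. This holds because such an $M$ is semisimple and hence admits a simple summand $S$; as $R$ is semisimple, $S$ is isomorphic to a minimal left ideal of $R$ and therefore embeds into $R$, producing a nonzero element of ${}^{\ast}M$. Here the finite generation of $V$ and $W$ enters precisely to guarantee that $M$ has such a simple summand. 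Throughout I observe that injectivity and surjectivity of $f$ and of ${}^{\ast}f$ are properties of the underlying \emph{left} $R$-module maps, so the bimodule hypothesis is only needed to place ${}^{\ast}f$ in the correct category and plays no role in the equivalence itself.

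For the forward direction, I would assume $f$ injective and complete it to a short exact sequence $0 \to V \xrar{f} W \to \coker f \to 0$. Applying the exact contravariant functor ${}^{\ast}(-)$ yields an exact sequence whose right-hand end reads ${}^{\ast}W \xrar{{}^{\ast}f} {}^{\ast}V \to 0$, so ${}^{\ast}f$ is surjective. (Equivalently, the sequence splits over $R$, giving $W \cong V \op \coker f$ as left modules and exhibiting ${}^{\ast}f$ as a split epimorphism.)

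For the converse I would argue by contraposition. Suppose $\Ker f \neq 0$. Every element of $\im({}^{\ast}f)$ has the form $\gamma \circ f$ for some $\gamma \in {}^{\ast}W$, hence vanishes on $\Ker f$. On the other hand, semisimplicity of $R$ splits the inclusion $\Ker f \hookrightarrow V$, giving a decomposition $V \cong \Ker f \op V'$ of left $R$-modules; by the nonvanishing property there is a nonzero functional $\lambda$ on $\Ker f$, which I extend by zero on $V'$ to obtain $\beta \in {}^{\ast}V$ with $\beta|_{\Ker f} \neq 0$. Then $\beta \notin \im({}^{\ast}f)$, so ${}^{\ast}f$ is not surjective, which is the desired contrapositive.

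The only genuinely delicate point is the nonvanishing ${}^{\ast}M \neq 0$ for nonzero finitely generated $M$: this is exactly where both the semisimplicity of $R$ (so that every simple module embeds into $R$) and the finite generation (so that $M$ has a simple summand) are used, and it is the step that would fail over a general ring. Everything else is a formal consequence of the exactness of the duality functor, so I expect no further obstacles.
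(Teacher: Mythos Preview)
Your proof is correct and uses the same two ingredients as the paper: exactness of ${}^{\ast}(-)=\dr{Hom}_R(-,R)$ (from injectivity of $R$) and the nonvanishing ${}^{\ast}M\neq 0$ for $M\neq 0$. The paper organises both directions at once by applying ${}^{\ast}(-)$ to the left-exact sequence $0\to\Ker f\to V\xrar{f}W$, which identifies $\coker({}^{\ast}f)\cong {}^{\ast}(\Ker f)$ and reduces the equivalence to ``$\Ker f=0\iff {}^{\ast}(\Ker f)=0$''; you instead treat the two implications separately, which is an inessential repackaging. One small remark: your claim that finite generation is what guarantees a simple summand is not the real reason---over a semisimple ring every nonzero module is semisimple and hence already has a simple summand---so, as in the paper's argument, the finiteness hypothesis is not actually used in the proof.
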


\begin{proof}
Let $X$ be the kernel of $f$, so the sequence $0 \to X \to  V \xrightarrow{f} W$ is exact. Since $R$ is a semisimple ring, it follows that $R$ is an injective $R$-module. Hence the functor $\dr{Hom}_R(-,R)$ is contravariant exact and the sequence $^\ast W \xrightarrow{^\ast \! f} {}^\ast V \to {}^\ast\! X \to 0$ is also exact. It follows that $^\ast\! X$ is the cokernel of $f$.

Now we can make a remark: $X = 0$ if and only if $^\ast\! X=0$. Indeed, if $X=0$, the result holds trivially. Conversely, assume that $^\ast\! X=0$. We will prove that if $X\neq 0$, then we can construct a nonzero left $R$-linear map from $X$ to $R$, thus obtaining a contradiction.

Hence, suppose that there exists $x \neq 0$ an element in $X$. There is an injective mapping $Rx \to X$ and we will use the fact that $R$ is an injective $R$-module to extend this to a map $X \to R$. Let $\varphi$ be the map $R \to Rx$ that sends $1$ to $x$. Then $\dr{Ker}\varphi$ is a left $R$-submodule of $R$ and $R/\dr{Ker}\varphi \simeq Rx$. 

Note that if $\dr{Ker}\varphi = R$, then $\varphi = 0$, but this contradicts the isomorphism of $R / \dr{Ker}\varphi$ with $Rx$, as $x$ was assumed to be nonzero. 

For $\dr{Ker}\varphi \neq R$, since $R$ is semisimple, $\dr{Ker}\varphi$ is a direct summand of $R$, hence there exists a left $R$-submodule $J$ such that $R \simeq \dr{Ker}\varphi \op J$. Then we have the following:
\[ Rx \simeq R/\dr{Ker}\varphi \simeq J \hookrightarrow R,\]
so we can define a map $\psi : Rx \to R$ as the composition of the last isomorphism and the inclusion.

Thus, using the fact that $R$ is an injective left $R$-module, we can complete the diagram:
\[
\xymatrixcolsep{3pc}\xymatrix{
0 \ar[r] & Rx \ar[d]_-\psi \ar[r] & X \ar@{.>}[dl]^-g \\
 & R & }
\]
The nonzero map $g : X \to R$ provides a contradiction and completes the proof.
\end{proof}
We will now make use of this result in the context of the left graded dual of an $R$-coring $A$. We refer the reader to \cite[\S 4.1-4.2]{ms} for the construction and the basic properties of such structures. For any $n, \ m \in \mathbb{N}$, consider the diagram:
\[ \xymatrixcolsep{3pc}\xymatrix{
{^\ast\!} A_{n+m} \ar[r]^-{{}^\ast\mu_{n,m}} \ar[dr]_-{\Delta_{n,m}} & {^\ast\!} (A_n \ot A_m) \ar[d] \\
& {^\ast\!} A_n \ot {^\ast\!} A_m & }\]

The vertical arrow is an isomorphism and ${}^\ast\mu_{n,m}$ is the transpose of the multiplication of $A$. Also, the map $\Delta_{n,m}$ is the comultiplication of the $R^{op}$-coring $\lgrr A$. Note also that if $A^p$ is a finitely generated $R$-bimodule, for all $p \in \bb{N}$, then $A^n \ot A^m$ is also finitely generated, for all positive integers $n,\ m.$

Using the lemma above, we can prove immediately the following:

\begin{proposition}
The $R$-ring $A$ is strongly graded if and only if the $R^{op}$-coring $\lgrr A$ is strongly graded.
\end{proposition}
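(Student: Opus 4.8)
The plan is to read off both ``strongly graded'' conditions from the comultiplication components $\Delta_{n,m}$ of $\lgrr A$ and to convert each of them into a statement about the transpose ${}^\ast\mu_{n,m}$ by means of the commutative triangle displayed just above the proposition. Since the vertical arrow ${}^\ast(A_n \otimes A_m) \to {}^\ast A_n \otimes {}^\ast A_m$ is an isomorphism and $\Delta_{n,m}$ is by construction its composite with ${}^\ast\mu_{n,m}$, the map $\Delta_{n,m}$ is injective if and only if ${}^\ast\mu_{n,m}$ is injective. Hence $\lgrr A$ is strongly graded exactly when every transpose ${}^\ast\mu_{n,m}$ is injective, whereas $A$ is strongly graded exactly when every multiplication component $\mu_{n,m}\colon A_n \otimes A_m \to A_{n+m}$ is surjective. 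The whole statement therefore reduces, pairwise in $n$ and $m$, to the equivalence ``$\mu_{n,m}$ surjective $\iff$ ${}^\ast\mu_{n,m}$ injective''.

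First I would record the finiteness needed to invoke the duality machinery: each $A^p$ is finitely generated as a left $R$-module by left local finiteness, so $A_n \otimes_R A_m$ is finitely generated as a left $R$-module as well, and $\mu_{n,m}$ is a morphism of $R$-bimodules between left-finitely-generated modules, as already noted before the statement. This places us squarely in the hypotheses of the lemma above.

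Next I would establish the ``surjective $\iff$ dual injective'' companion of that lemma, which is the version actually needed here. For the easy direction, surjectivity of $\mu_{n,m}$ gives injectivity of ${}^\ast\mu_{n,m}$ at once, because $R$ is semisimple, hence injective over itself, so $\Hom_R(-,R)$ is contravariant exact. For the converse, set $Y = \Coker \mu_{n,m}$ and apply $\Hom_R(-,R)$ to the exact sequence $A_n \otimes A_m \to A_{n+m} \to Y \to 0$; exactness yields ${}^\ast Y = \Ker({}^\ast\mu_{n,m})$. Thus if ${}^\ast\mu_{n,m}$ is injective then ${}^\ast Y = 0$, and the fact ``$X = 0 \iff {}^\ast X = 0$'' proved inside the lemma above forces $Y = 0$, i.e. $\mu_{n,m}$ is surjective. (Alternatively, one may apply the lemma directly to $g = {}^\ast\mu_{n,m}$ and invoke reflexivity of finitely generated modules over a semisimple ring, so that ${}^{\ast\ast}\mu_{n,m} \cong \mu_{n,m}$.)

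Finally I would assemble the chain $A$ strongly graded $\iff$ all $\mu_{n,m}$ surjective $\iff$ all ${}^\ast\mu_{n,m}$ injective $\iff$ all $\Delta_{n,m}$ injective $\iff$ $\lgrr A$ strongly graded. The only genuinely delicate point is the converse in the companion statement, namely deducing $Y = 0$ from ${}^\ast Y = 0$; but this is precisely the nonzero-functional construction already carried out in the proof of the lemma above, using injectivity of ${}_R R$ to extend a map off a cyclic submodule, so no new argument is required. Everything else is a formal transport of injectivity and surjectivity across the isomorphism in the triangle.
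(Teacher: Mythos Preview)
Your proposal is correct and follows the same route as the paper: reduce to the triangle, pass from $\Delta_{n,m}$ to ${}^\ast\mu_{n,m}$ via the vertical isomorphism, and then use the duality equivalence between surjectivity of $\mu_{n,m}$ and injectivity of its transpose. The paper's proof simply asserts this last equivalence in one line, whereas you correctly observe that the preceding lemma is stated in the form ``$f$ injective $\iff$ ${}^\ast f$ surjective'' and so you explicitly supply the companion statement ``$f$ surjective $\iff$ ${}^\ast f$ injective'' via the cokernel argument; this is a genuine clarification rather than a different method.
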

\begin{proof}
Looking at the diagram above, it suffices to note that $A$ is strongly graded if and only if $\mu^{n,m}$ is surjective, for all $n, \ m \in \bb{N}$. This holds if and only if ${}^\ast\mu_{n,m}$ is injective and $\Delta_{n,m}$, which is its composition with an isomorphism, is also injective. This, in turn, according to the definition, makes the $R^{op}$-coring $\lgrr A$ strongly graded.
\end{proof}

In the hypothesis of $A$ being a Koszul ring, we can provide another isomorphism that connects the $\dr{Ext}$ and the shriek structures.

\begin{theorem} \label{te:shriek}
Let $A$ be a connected, left locally finite $R$-ring. If $A$ is Koszul, then $\kal{E}(A)$ is a Koszul $R$-ring and there exists an isomorphism $\kal{E}(A) \simeq (\lgrr A)^!$.
\end{theorem}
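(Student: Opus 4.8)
The plan is to reduce to the isomorphism $\kal{E}(A) \simeq E(\lgrr A)$ already recorded in Corollary \ref{cor:eea}, and then to identify $E(\lgrr A)$ with the shriek ring $(\lgrr A)^!$ by exhibiting a suitable Koszul pair. Throughout I set $C := \lgrr A$, which by \cite[\S 4.2]{ms} is a connected, left locally finite $R^{op}$-coring.

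First I would check that $C=\lgrr A$ is a Koszul coring. Since $A$ is Koszul, the canonical almost Koszul pair $(A,A^!)$ from \S\ref{sec:ex} is in fact a Koszul pair, by the characterisation of Koszulity in \cite[Theorems 2.1 and 3.4]{ms}. The left graded dual interchanges the two members of a Koszul pair and sends it to a Koszul pair in the opposite categories, as part of the duality developed in \cite[\S 4]{ms}; hence $\lgrr A$ appears as a member of a Koszul pair and is therefore a Koszul coring.

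Next I would feed $C$ into the example $(C^!,C)$ of \S\ref{sec:ex}, which is always almost Koszul via $\theta_{C,C^!}=\dr{Id}_{C_1}$. Applying the coring-side version of the characterisation invoked above (again \cite[Theorems 2.1 and 3.4]{ms}) to the Koszul coring $C$, this almost Koszul pair is genuinely Koszul. In particular its ring member $C^! = (\lgrr A)^!$ is a Koszul ring, which settles the first assertion of the theorem once the isomorphism is in place. To finish, I would apply \cite[Theorem 2.9]{jps} to the Koszul pair $(C^!,C)$, recovering the ring member as $C^! \simeq E(C)$, and then chain this with Corollary \ref{cor:eea}:
\[ \kal{E}(A) \simeq E(\lgrr A) = E(C) \simeq C^! = (\lgrr A)^!. \]

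The step I expect to require the most care is the transfer of Koszulity: that the shriek-based almost Koszul pairs $(A,A^!)$ and $(C^!,C)$ are genuine (not merely almost) Koszul pairs under the hypothesis, and that the left graded dual preserves Koszulity. These are precisely the places where the equivalence of the present pair-theoretic formalism with the classical notion of Koszulity (\cite[Theorems 2.1 and 3.4]{ms}) and the graded-dual duality of \cite[\S 4]{ms} are essential; the remainder is a formal concatenation of isomorphisms that are already available.
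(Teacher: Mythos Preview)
Your proposal is correct and follows essentially the same route as the paper: both arguments reduce to Corollary \ref{cor:eea}, use that the left graded dual preserves Koszulity (you via the duality of \cite[\S 4]{ms} applied to the pair $(A,A^!)$, the paper via \cite[Corollary 4.5]{ms} directly), and then identify $E(\lgrr A)$ with $(\lgrr A)^!$ for the Koszul coring $\lgrr A$ (you via \cite[Theorem 2.9]{jps} applied to the Koszul pair $(C^!,C)$, the paper via \cite[Theorem 3.4]{ms}). The only cosmetic difference is the order: the paper first shows $\kal{E}(A)$ is Koszul through the chain $A \Rightarrow T(A) \Rightarrow \lgr T(A) \simeq \kal{E}(A)$ and then establishes the isomorphism, whereas you deduce Koszulity of $\kal{E}(A)$ from that of $(\lgrr A)^!$ once the isomorphism is in hand.
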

\begin{proof}
Using \cite[Theorem 2.1]{ms}, $A$ is a Koszul $R$-ring if and only if $T(A)$ is a Koszul $R$-coring. Further, cf. \cite[Corollary 4.5]{ms}, this is equivalent to $\lgr T(A)$ being a Koszul $R$-ring. Now the isomorphism $\kal{E}(A) \simeq \lgr T(A)$ from Theorem \ref{cor:yon} makes $\kal{E}(A)$ a Koszul $R$-ring. Moreover, since $\lgrr A$ is Koszul, using \cite[Theorem 3.4]{ms}, we know that $E(\lgrr A) \simeq (\lgrr A)^!$. Using Corollary \ref{cor:eea}, the required isomorphism follows.
\end{proof}

Note that if $\kal{E}(A)$ is a Koszul $R$-ring, then, in particular, it is strongly graded. This means that it is generated by its homogeneous component of degree 1 or, equivalently, that the components of the cup product $\cup^{p,q} : \kal{E}^p(A) \ot \kal{E}^q(A) \to \kal{E}^{p+q}(A)$ are surjective, for all $p, \ q \in \bb{N}$.

The interplay between Koszulity and the $\dr{Ext}$ ring can be further put together in a simple, yet comprehensive corollary:

\begin{corollary}
If $A$ is a Koszul $R$-ring then $T(A)$ is a strongly graded $R$-coring and $\kal{E}(A)$ is a strongly graded $R$-ring.
\end{corollary}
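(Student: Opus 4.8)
The plan is to split the statement into its two assertions and derive each by assembling results already proved in this section; no computation is needed, only careful bookkeeping of which side (ring or coring) each hypothesis lives on. Write the claim as (i) $T(A)$ is a strongly graded $R$-coring and (ii) $\kal{E}(A)$ is a strongly graded $R$-ring, and handle (ii) first since it feeds into (i).

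For (ii) I would argue straight from Theorem \ref{te:shriek}. Since $A$ is Koszul, that theorem already gives that $\kal{E}(A)$ is itself a \emph{Koszul} $R$-ring. By the remark recorded immediately after Theorem \ref{te:shriek}, a Koszul $R$-ring is in particular strongly graded: it is generated by its degree-one component, equivalently all components $\cup^{p,q} : \kal{E}^p(A) \ot \kal{E}^q(A) \to \kal{E}^{p+q}(A)$ of the cup product are surjective. As surjectivity of every $\mu^{m,n}$ is precisely the definition of a strongly graded $R$-ring, assertion (ii) is immediate.

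For (i) I would pass to the left graded dual. By Theorem \ref{cor:yon} there is an isomorphism of graded $R$-rings $\kal{E}(A) \simeq \lgr T(A)$; because all the structures involved are left locally finite, biduality applies and taking duals yields $\lgr \kal{E}(A) \simeq \lgr \lgr T(A) \simeq T(A)$ as (co)rings. Now invoke the Proposition that an $R$-ring is strongly graded if and only if its left graded dual is strongly graded, applied to the ring $\kal{E}(A)$, which is strongly graded by (ii): this forces its dual $\lgr \kal{E}(A) \simeq T(A)$ to be a strongly graded $R$-coring. (Alternatively, and more directly, by \cite[Theorem 2.1]{ms} the Koszulity of $A$ is equivalent to $T(A)$ being a Koszul $R$-coring, and the coring-side dual of the degree-one generation argument shows a Koszul coring is cogenerated in degree one, i.e. all $\Delta_{m,n}$ are injective, which is exactly strong gradedness for a coring.)

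The corollary therefore follows by transitivity of the cited isomorphisms, and I do not expect any genuine obstacle. The only two points demanding care are: first, that the implication ``Koszul $\Rightarrow$ strongly graded'' be applied on the correct side, since the definition dualises from surjectivity of the $\mu^{m,n}$ to injectivity of the $\Delta_{m,n}$; and second, that the biduality identification $\lgr\lgr T(A) \simeq T(A)$ be legitimised by the standing left-local-finiteness hypothesis in force throughout the section. With both observations in place the argument is purely formal.
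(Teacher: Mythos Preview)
Your proposal is correct, and since the paper states the corollary without proof (treating it as an immediate consequence of the preceding discussion), there is no literal ``paper's proof'' to compare against; what the context suggests is precisely your route for (ii) and your \emph{alternative} route for (i).

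For (i), your primary argument via biduality is sound but more elaborate than necessary. The paper's implicit reasoning is simply: by \cite[Theorem 2.1]{ms} (already invoked in the proof of Theorem~\ref{te:shriek}), $A$ Koszul forces $T(A)$ to be a Koszul $R$-coring, and a Koszul coring is strongly graded by the dual of the ``Koszul $\Rightarrow$ generated in degree one'' observation. This avoids both the biduality identification $\lgr\lgr T(A)\simeq T(A)$ and the Proposition relating strong gradedness under $\lgr(-)$. Your detour through $\lgr\kal{E}(A)$ works, but you should note that the Proposition as stated in the paper goes from an $R$-ring to its dual $R^{\mathrm{op}}$-coring; applying it directly to the coring $T(A)$ (so that you land on $\lgr T(A)\simeq\kal{E}(A)$ without needing biduality) would require the dual statement, which is equally easy but not written down. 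Either way the argument is formal and your caveats about local finiteness and the ring/coring dichotomy are well placed.
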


\section{Applications}

\subsection{Koszul Posets}
Let $\kal{P}$ be a finite partially ordered set (poset). We call $\kal{P}$ \emph{graded} if all of the maximal chains in $\kal{P}$ have the same length. It is a basic fact that $\kal{P}$ is graded if and only if the incidence algebra of $\kal{P}$ is graded (hence the name).

Let $\Bbbk$ be a field. We are interested in applying the results in the previous section for $A=\Bbbk^a[\kal{P}]$, the incidence algebra of $\kal{P}$ which is an $R$-ring with respect to the semisimple ring $R=\Bbbk^{\#\kal{P}}$. We will also consider $\Bbbk^c[\kal{P}]$, which is the incidence $R$-coring of the poset $\kal{P}$. Call the poset $\kal{P}$ \emph{Koszul} when $\Bbbk^a[\kal{P}]$ is a Koszul $R$-ring (or, equivalently, when $\Bbbk^c[\kal{P}]$ is a Koszul $R$-coring). For more details on these structures, we indicate \cite[\S4.8]{ms}.

Now we can apply Theorem \ref{te:shriek} in this context.

\begin{proposition}
Let $(\kal{P},\leq)$ be a Koszul poset. Let $A=\Bbbk^a[\kal{P}]$ be its incidence $R$-ring and $C=\Bbbk^c[\kal{P}]$ be its incidence $R$-coring. Then there is an isomorphism of graded $R$-rings:
\[ \kal{E}(\Bbbk^a[\kal{P}]) \simeq \Bbbk^c[\kal{P}]^!,\]
where the former is the shriek ring associated to the incidence $R$-coring of $\kal{P}$.
\end{proposition}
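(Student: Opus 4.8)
The plan is to read off this statement as an immediate specialisation of Theorem \ref{te:shriek}, using the concrete description of the left graded dual of an incidence algebra. First I would verify the hypotheses of Theorem \ref{te:shriek} for $A = \Bbbk^a[\kal{P}]$: since $\kal{P}$ is a finite poset, $A$ is left locally finite and connected (with $A^0 = R = \Bbbk^{\#\kal{P}}$), and since $\kal{P}$ is by assumption Koszul, $A$ is a Koszul $R$-ring. Theorem \ref{te:shriek} then furnishes an isomorphism of graded $R$-rings $\kal{E}(A) \simeq (\lgrr A)^!$.

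Next I would replace the abstract dual $\lgrr A$ by the incidence coring. By \cite[Theorem 4.9]{ms}, for a graded poset the left graded dual of the incidence $R$-ring is isomorphic, as a graded $R$-coring, to the incidence $R$-coring, that is $\lgrr A \simeq \Bbbk^c[\kal{P}] = C$. Combining this with the previous step gives $\kal{E}(A) \simeq (\lgrr A)^! \simeq C^!$, provided the shriek construction transports isomorphisms. This last point is the only thing requiring an argument: an isomorphism $g : \lgrr A \to C$ of graded, connected corings intertwines the degree-$(1,1)$ comultiplication components, so that $g_1 \ot g_1$ carries the image of $\Delta_{1,1}$ for $\lgrr A$ onto that for $C$; since both shriek rings are defined as $\langle -, \dr{Im}\Delta_{1,1}\rangle$, the map $g$ extends to an isomorphism of the quotient tensor algebras $(\lgrr A)^! \simeq C^!$.

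The only genuine subtlety is a bookkeeping one concerning the ground ring: the dual $\lgrr A$ is a priori an $R^{op}$-coring, whereas $\Bbbk^c[\kal{P}]$ is an $R$-coring, but here this causes no difficulty because $R = \Bbbk^{\#\kal{P}}$ is commutative, so $R = R^{op}$ and the two notions coincide. I expect no computational obstacle: once the hypotheses of Theorem \ref{te:shriek} are checked and the identification of \cite[Theorem 4.9]{ms} is invoked with matching grading conventions, the chain $\kal{E}(A) \simeq (\lgrr A)^! \simeq \Bbbk^c[\kal{P}]^!$ closes with no further work.
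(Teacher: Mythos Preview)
Your proposal is correct and follows essentially the same route as the paper: invoke Theorem \ref{te:shriek} to obtain $\kal{E}(A)\simeq(\lgrr A)^!$, and then use the identification of \cite[Theorem 4.9]{ms} between $\lgrr A$ and $\Bbbk^c[\kal{P}]$. Your write-up is in fact tidier than the paper's own proof, which after citing Theorem \ref{te:shriek} also re-traverses parts of its argument through the chain of Koszul pairs $(A,A^!)$, $(C^!,C)$ and Theorem \ref{cor:yon}; your explicit remarks that the shriek construction is functorial on coring isomorphisms and that $R=\Bbbk^{\#\kal{P}}$ is commutative (so the $R$ vs.\ $R^{op}$ issue for $\lgrr A$ disappears) are points the paper leaves implicit.
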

\begin{proof}
Using Theorem \ref{te:shriek} above, note that, by \cite[Theorem 4.9]{ms}, we have the isomorphism $\Bbbk^a[\kal{P}] \simeq \lgr \Bbbk^c[\kal{P}]$. Since $\kal{P}$ is Koszul, it follows that $(A,C)$ is a Koszul pair. Using \cite[Theorem 2.1]{ms}, we know that the pair $(A,A^!)$ is Koszul as well and that there exists a canonical isomorphism $A^! \to T(A)$. Moreover, $T(A) \simeq C$ and the pair $(C^!,C)$ is also Koszul (cf. \cite[Theorem 3.4]{ms}). Putting these together in Theorem \ref{cor:yon}, we obtain the desired result.
\end{proof}

This result brings a simplification for computing the $\dr{Ext}$ structure of an incidence ring. It will provide a method of presenting the structure in a generators and relations form. This can be done using an isomorphism with a shriek ring, as follows. Let $\Gamma$ be the finite quiver associated to the poset $\kal{P}$ that is its Hasse diagram. Then, via the isomorphism in the proposition above, we have the following explicit description:
\begin{equation} \label{ec:ka}
 \kal{E}(\Bbbk^a[\kal{P}]) \simeq \Bbbk^c[\kal{P}]^! = \ds\frac{\Bbbk^a[\Gamma]}{\Big\langle \ds\sum_{x < z < y} e_{x,z} \ot e_{z,y} \ \Big| \ l([x,y])=2 \Big\rangle}.	
\end{equation}

\begin{figure}
\centering
\begin{subfigure}[b]{0.3\textwidth}
\includegraphics[width=\textwidth]{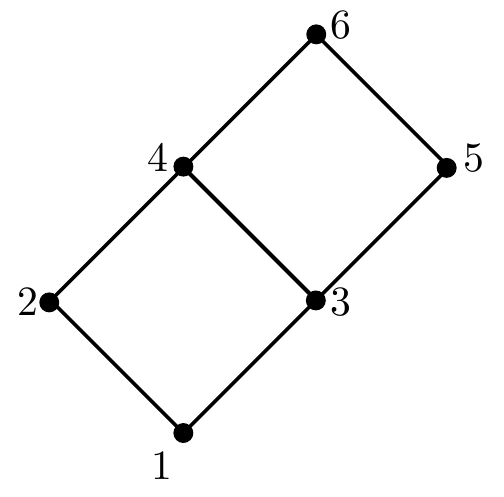}
\caption{Planar Koszul Tiling}	
\label{fig1}
\end{subfigure}
\qquad
\begin{subfigure}[b]{0.3\textwidth}
\includegraphics[width=\textwidth]{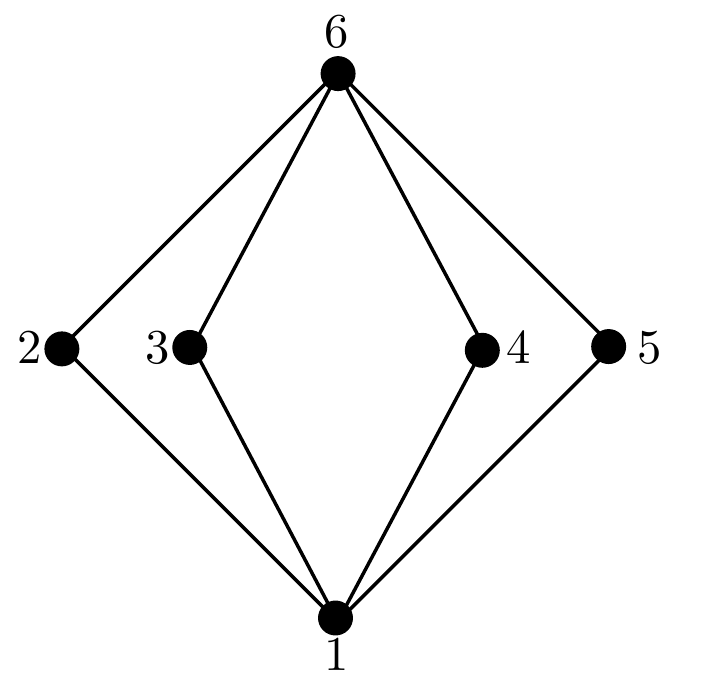}
\caption{Nested Koszul Diamond}	
\label{fig2}
\end{subfigure}
\caption{Concrete Examples of Koszul Posets}
\end{figure}

For example, the poset in Figure {\color{red} 1(A)} is Koszul, by \cite[\S 5]{ms}. 
Thus, the incidence algebra of the corresponding quiver, $A = \Bbbk^a[\Gamma]$ is generated by the set $\kal{B}=\{e_{1,2}, \ e_{1,3}, \ e_{2,4}, \ e_{3,4}, \ e_{3,5}, \ e_{4,6}, \ e_{5,6}\}$.
The relations are given by the $\Bbbk$-vector space $\kal{R}$ spanned by parallel paths (i.e. sharing the source and the target) of length two. Namely: $ \kal{R} = \langle e_{1,2} \ot e_{2,4} + e_{1,3} \ot e_{3,4}, \ e_{3,4} \ot e_{4,6} + e_{3,5} \ot e_{5,6} \rangle$.

Therefore, the identification in equation (\ref{ec:ka}) gives the isomorphism: $\kal{E}(\Bbbk^a[\kal{P}]) \simeq \Bbbk^c[\kal{P}]^! = \Bbbk^a[\Gamma] / \kal{R}$.

Another example that we consider is that of a nested vertical diamond. \cite[\S 5.14]{ms} proves that these posets are also Koszul. As such, consider the poset in Figure {\color{red} 1(B)}.
A similar reasoning as above provides the generator set $\kal{B} = \{ e_{1,2}, \ e_{1,3}, \ e_{1,4}, \ e_{1,5}, \ e_{2,6}, \ e_{3,6}, \ e_{4,6}, \ e_{5,6}\}$. In this case, the ideal of relations is generated by one element:
\[\kal{R} = \Big\langle \sum_{i=2}^5 e_{1,i} \ot e_{i,6} \Big\rangle.\]

\subsection{Monoid Rings}
We can provide another type of examples for Koszul rings, taken from the class of monoid rings and using the special case of submonoids of $\bb{Z}^n$, the $n$-fold product of the integer set. We will partly follow the approach from \cite[\S 1.2]{str}. The basic setup is as follows. Consider $(M,+)$ an associative and cancellative monoid. By definition, this means that if $a + b = a + d$ for some arbitrary elements $a,b,d \in M$, then $a$ can be cancelled and $b=d$. Analogously, if $b + a = d + a$, then $b=d$. Let $\Bbbk$ be a field. The \emph{monoid ring} $\Bbbk M$ associated to $M$ has a $\Bbbk$-vector space structure with a basis given by the elements $\{\xi_m\}_{m \in M}$ indexed on $M$ and multiplication defined as $\xi_m \cdot \xi_n = \xi_{m+n}$, extended $\Bbbk$-bilinearly.

To study Koszulity, consider $A=\Bbbk M$, which could be decomposed as $A = \Bbbk\xi_0 \op A_+$, where $A_+ = \Bbbk \xi_m, \ m \neq 0$. This way $A$ becomes an augmented $\Bbbk$-algebra. 

Let $A^1$ be the $\Bbbk$-span of all the indecomposable elements of $M$. That is, those $m \in M$ which cannot be written as $m = m_1 + m_2$, with $m_1, \ m_2 \in M$, both nonzero. It is known that $A$ is generated as an $\Bbbk$-algebra by $A^1$ and it becomes $\bb{N}$-graded if and only if there exists a functional on $\bb{R}^n$ which takes the value 1 on all indecomposables $m \in A^1$. The homogeneous component of degree $n$ is generated by elements of the form $\xi_m$, where $m$ can be written as a sum of exactly $n$ indecomposable elements of the monoid. In this case, we write $deg(m)=n$.

We can describe the $\dr{Ext}$ algebra of a monoid ring $A=\Bbbk M$ as follows. Assume that $A$ has a finite number of indecomposable elements. Thus $A^n=\langle \xi_m \mid \ deg(m)=n \rangle$, hence $A$ is finitely generated. Now, the graded linear dual $A^{\ast\text{-gr}}$ has a dual basis over the field $\Bbbk$ in the usual sense. That is, $A_n^\ast=\langle \xi^\ast_m \mid \ deg(m) = n \rangle$. The comultiplication $\Delta : A^{\ast\text{-gr}}\to A^{\ast\text{-gr}} \ot A^{\ast\text{-gr}}$ acts as:
\[
\Delta(\xi^\ast_m) = \sum_{m} \xi^\ast_{m'} \ot \xi^\ast_{m''}.
\]

In the sum above, we have used a Sweedler-type notation, in the sense that the indices $m'$ and $m''$ are arbitrary and sum up to $m$.

Then, using the isomorphism in Theorem \ref{cor:yon}, we can provide a presentation of $\kal{E}(A)$ with generators and relations. Namely, $\kal{E}(A)$ is the quotient of the tensor algebra of ${}^\ast\! A^1$ modulo the ideal generated by the image of $\Delta_{1,1} : {}^\ast\! A^2 \to {}^\ast\! A^1 \ot {}^\ast\! A^1$. Note that for $deg(m)=2$, $\Delta_{1,1}(\xi^\ast_m)=\sum \xi^\ast_{m'} \ot \xi^\ast_{m''}$, where $m'$ and $m''$ are indecomposable and $m' + m''=m$. Thus, we obtain:
\begin{equation} \label{ec:shriek}
\kal{E}(A) = \ds\frac{\Bbbk[\xi^\ast_m \mid deg(m)=1]}{\Big\langle \Delta_{1,1}(\xi^\ast_m) \mid \deg(m)=2 \Big\rangle}.
\end{equation}

As mentioned, Koszulity of monoid rings (and semigroup rings, in general) is studied in relation to posets in \cite{str}. We will restrict here to studying a concrete example with a slightly different approach.

Let $M$ be the submonoid of $\bb{Z}^2$ generated by the elements $m_1=(2,0), \ m_2=(0,2), \ m_3=(1,1)$. These elements are indecomposable in $M$, so if $A=\Bbbk M$, then $A^0 = \Bbbk \xi_0 \simeq \Bbbk$ and $A^1$ is generated as a $\Bbbk$-vector space by the set $\{m_1, \ m_2, \ m_3\}$. It is clear that for this particular case, $A$ is an $\bb{N}$-graded $\Bbbk$-algebra. 

Moreover, we can provide a presentation of $A$ with generators and relations. Note that $A$ is generated by $A^1$ and has as unique relation $\xi_{m_1} \cdot \xi_{m_2} = \xi_{m_3}^2$. Thus, we can identify $A$ with a quotient of a polynomial algebra, namely $A \simeq \Bbbk[X,Y,Z]/(X^2 - YZ)$. The fact that this is a Koszul $R$-ring can be seen using \cite[Corollary 6.3]{pp}.

Now we can compute explicitly the $\dr{Ext}$ algebra of $A$. Using the presentation in equation (\ref{ec:shriek}), make a shorthand notation $\xi^\ast_{m_i} = x_i$ and we obtain readily:
\[ \kal{E}(A) = \ds\frac{\Bbbk \langle x_1,x_2,x_3  \rangle}{\langle x_1^2,x_2^2,x_3^2 + x_1x_2 + x_2x_1, x_1x_3 + x_3 x_1 \rangle}.\]

\section*{Acknowledgements}
The author was financially supported by the Ministry of Education - OIPOSDRU, project POSDRU/159/1.5/S/137750. 

Gratitude is also due to Drago\c{s} \c{S}tefan and Marian Aprodu for many useful discussions on the subject.

\end{document}